\documentclass[11pt]{article}
\usepackage{amsmath}
\usepackage{amsfonts}
\usepackage{amsthm}
\usepackage{amssymb, setspace}
\usepackage{color,graphicx,epsfig,geometry,hyperref,fancyhdr}
\usepackage[T1]{fontenc}
\usepackage{float}
\usepackage{subfig}
\usepackage{commath}
\usepackage{bbm}
\usepackage{mathrsfs,fleqn}
\usepackage{pgfplots}
\pgfplotsset{compat=newest}
\newtheorem{theorem}{Theorem}[section]

\newtheorem{example}{Example}[section]
\newcommand{\N}{\mathbb{N}}
\newcommand{\Z}{\mathbb{Z}}
\newcommand{\weakc}{\rightharpoonup}
\newcommand{\R}{\mathbb{R}}
\newcommand{\C}{\mathbb{C}}

\newcommand{\grad}{\nabla}

\usepackage{ulem}

\graphicspath{{paper-pics/}}

\begin{document}

\begin{flushleft}
\Large 
\noindent{\bf \Large Analysis of the transmission eigenvalue problem with two conductivity parameters}
\end{flushleft}

\vspace{0.2in}
{\bf  \large Rafael Ceja Ayala and Isaac Harris}\\
\indent {\small Department of Mathematics, Purdue University, West Lafayette, IN 47907 }\\
\indent {\small Email:  \texttt{rcejaaya@purdue.edu}  and \texttt{harri814@purdue.edu} }\\

{\bf  \large Andreas Kleefeld}\\
\indent {\small Forschungszentrum J\"{u}lich GmbH, J\"{u}lich Supercomputing Centre, } \\
\indent {\small Wilhelm-Johnen-Stra{\ss}e, 52425 J\"{u}lich, Germany}\\
\indent {\small University of Applied Sciences Aachen, Faculty of Medical Engineering and } \\
\indent {\small Technomathematics, Heinrich-Mu\ss{}mann-Str. 1, 52428 J\"{u}lich, Germany}\\
\indent {\small Email: \texttt{a.kleefeld@fz-juelich.de}}\\

{\bf  \large Nikolaos Pallikarakis}\\
\indent {\small Department of Mathematics, National Technical University of Athens,}\\
\indent {\small 15780 Athens, Greece}\\
\indent {\small Email:  \texttt{npall@central.ntua.gr} }\\

\begin{abstract}
\noindent In this paper, we provide an analytical study of the transmission eigenvalue problem with two conductivity parameters. We will assume that the underlying physical model is given by the scattering of a plane wave for an isotropic scatterer. In previous studies, this eigenvalue problem was analyzed with one conductive boundary parameter where as we will consider the case of two parameters. We will prove the existence and discreteness of the transmission eigenvalues as well as study the dependance on the physical parameters. We are able to prove monotonicity of the first transmission eigenvalue with respect to the parameters and consider the limiting procedure as the second  boundary parameter vanishes. Lastly, we provide extensive numerical experiments to validate the theoretical work.
\end{abstract}

\section{Introduction}
In {this} paper, we will study the transmission eigenvalue problem for an acoustic isotropic scatterer with two conductive boundary conditions. Transmission eigenvalues have been a very active field of investigation in the area of inverse scattering. This is due to the fact that these eigenvalues can be recovered from the far-field data see for e.g. \cite{far field data,anisotropic} as well as can be used to determine defects in a material \cite{TA,cavities,electro,mypaper1,te-cbc3}.  In general, one can prove that the  transmission eigenvalues depend monotonically on the physical parameters, which implies that they can be used as a target signature for non-destructive testing. Non-destructive testing arises in many applications such as engineering and medical imaging, i.e. one wishes to recover information about the interior structure given exterior measurements. Therefore, by having information or knowledge of the transmission eigenvalues, one can retrieve information about the material properties of the scattering object. Another reason one studies these eigenvalue problems, {is their non-linear and non-self-adjoint nature}. This makes them mathematically challenging to study. 

Deriving accurate numerical algorithms to compute the transmission eigenvalues is an active field of study see for e.g. \cite{spectraltev1,spectraltev2,fem-te,CMS,GP,numerical,mfs-te,eig-FEM-book}. As mentioned, here we consider the scalar transmission eigenvalue problem with a two parameter conductive boundary condition denoted $\lambda$ and $\eta$. This problem was first introduced in \cite{fmconductbc}. The eigenvalue problem with one conductive boundary condition has been studied in \cite{te-cbc,te-geo-paper1,te-cbc2,te-cbc3} for the case of acoustic scattering where as in \cite{two-eig-cbc,electro-cbc} for electromagnetic scatterers. Due to the presence of the second parameter in the conductive boundary condition the analysis used in the aforementioned manuscripts will not work for the problem at hand. Therefore, we will need to use different analytical tools to study our transmission eigenvalue problem. 

The rest of the paper is organized as follows. We will derive the transmission eigenvalue problem under consideration from the direct scattering problem in Section \ref{theproblem}. Next, in Section \ref{discrete} we prove that the transmission eigenvalues form a discrete set in the complex plane as well as provide and example via separation of variables to prove that this is a non-selfadjoint eigenvalue problem. Then in Section \ref{exist}, we prove the existence of infinitely many real transmission eigenvalues as well as study the dependance on the material parameters. {Furthermore,} in Section \ref{limit} we consider the limiting process as $\lambda \to 1$ where we are able to prove that the transmission eigenpairs converge to the eigenpairs for one conductive boundary parameter i.e. with $\lambda =1$. Numerical examples, using separation of variables are given in Section \ref{numerics} to validate the analysis presented in the earlier sections. Future, numerical results are given using boundary integral equations.

\section{Formulation of the problem}\label{theproblem}
We will now state the transmission eigenvalue problem under consideration by connecting it to the direct scattering problem. To this end, we will formulate the direct scattering problem associated with the transmission eigenvalues in $\mathbb{R}^d$ where $d=2$ or $d=3$. Let $D\subset \mathbb{R}^d$ be a simply connected open set with $C^2$ boundary $\partial D$ where $\nu$ denotes the unit outward normal vector. We then assume that the refractive index $n \in L^{\infty}(D)$ satisfies  
$$0<n_{{min}} \leq n(x)\leq n_{{max}}< \infty \quad \text{for a.e. } \,\, x \in D.$$ 
We are particularly interested in the case where there exists two (conductivity) boundary parameters $\lambda$ and $\eta$ as in \cite{fmconductbc}. {These parameters occur e.g. when the scattered medium is enclosed by a thin layer with high conductivity \cite{Schm}.} Therefore, we assume $\eta \in L^{\infty}(\partial D)$ such that 
$$ \eta_{{min}} \leq \eta(x)\leq \eta_{{max}} \quad \text{for a.e. } \,\, x \in \partial D$$
and fixed constant $\lambda \neq 1$. 
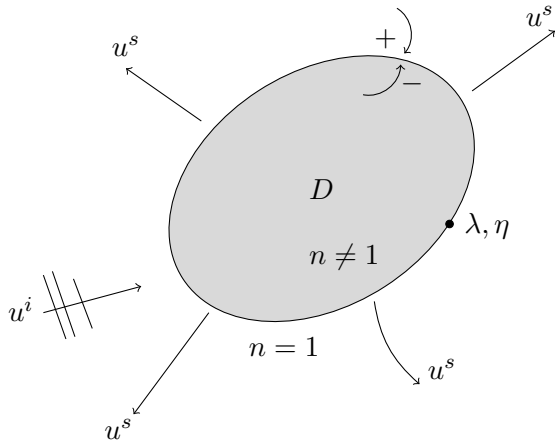
\begin{figure}[ht]
\centering 
\begin{tikzpicture}
\draw[fill=gray!30, rotate=33] (0,0) ellipse (2.2cm and 1.55cm) node [thick] {$D$};
\draw (0.3,-0.9) node {$n\neq1$};
\draw (-0.5,-2.1) node {$n=1$};

\draw[bend right=20, ->] (0.7,-1.5) to (1.3, -2.6);
\draw (1.6,-2.4) node {$u^s$};


\draw[fill] (1.7,-0.47) circle (0.05);

\draw [thick] (2.2,-0.5) node {$\lambda,\eta$};

\draw [->] (2,1.3)--(3.1,2.1);
\draw (2.9,2.3) node {$u^s$};

\draw[bend left=50, ->] (1,2.4) to (1.1, 1.8);
\draw (0.85,2) node {$+$};

\draw[bend right=50, ->] (0.55,1.25) to (1.05, 1.65);
\draw (1.2,1.4) node {$-$};

\draw [->] (-1.6,0.9)--(-2.6,1.6);
\draw (-2.6,1.9) node {$u^s$};

\draw [->] (-1.5,-1.65)--(-2.5,-3);
\draw (-2.7,-3.2) node {$u^s$};

\draw [->] (-3.7,-1.65)--(-2.4,-1.3);
\draw (-4,-1.6) node {$u^i$};

\draw (-3.7,-1.15)--(-3.4,-2);
\draw (-3.58,-1.1)--(-3.28,-1.97);
\draw (-3.3,-1.2)--(-3.05,-1.86);
\end{tikzpicture}
\caption{Illustration of the direct scattering problem in $\R^2$.}
\label{domain}
\end{figure}

We let $u=u^s+u^i$ denote the total field and $u^s$ is the scattered field created by the incident plane wave $u^i:= \text{e}^{\text{i}kx\cdot \hat{y}}$ with wave number $k>0$ and $\hat{y}$ the incident direction. The direct scattering problem for an isotropic homogeneous scatterer with a two parameter conductive boundary condition can be formulated as:
find $u^s \in H^1_{loc}(\mathbb{R}^d \backslash \partial {D} )$ satisfying
\begin{align}
\Delta u^s+k^2 n(x)u^s=k^2 \big(1-n(x)\big) u^i \quad & \text{in} \hspace{.2cm}  \mathbb{R}^d \backslash \partial D \label{direct1}\\
u^s_{-} - u_{+}^s=0  \quad \text{ and } \quad \lambda \partial_\nu \big(u^s_{-} +u^i\big) = \eta(x) \big(u_{+}^s+u^i\big) +  \partial_\nu \big(u_{+}^s+u^i\big)  \quad & \text{on} \hspace{.2cm} \partial D \label{direct2}
\end{align}
where $\partial_\nu \phi:= \nu \cdot \nabla \phi$ for any $\phi$.  Here $-$ and $+$ corresponds to taking the trace from the interior or exterior of $D$, {respectively} (see Figure \ref{domain}). To close the system, we impose the Sommerfeld radiation condition on the scattered field $u^s$ 
$$ {\partial_r u^s} - \text{i} ku^s =\mathcal{O} \left( \frac{1}{ r^{(d+1)/2} }\right) \quad \text{ as } \quad r \rightarrow \infty$$
which holds uniformly with respect to the angular variable $\hat{x}=x/r$ where $r=|x|$. Here, $| \cdot |$ denotes the Euclidean norm for a vector in $\R^d$. 

It has be shown that \eqref{direct1}--\eqref{direct2} is well-posed in \cite{fmconductbc}.  Therefore, we have that the scattered field $u^s$  has the asymptotic behavior (see for e.g. \cite{approach,IST of TA})
$$u^s(x,\hat{y})=\gamma\frac{\text{e}^{\text{i}k|x|}}{|x|^{(d-1)/2}}\left\{u^{\infty}(\hat{x},\hat{y})+\mathcal{O}\left(\frac{1}{|x|}\right)\right\}\hspace{.3cm}\text{as}\hspace{.3cm}|x|\longrightarrow \infty$$
and where the constant $\gamma$ is given by 
$$ \gamma=\frac{\text{e}^{\text{i}\pi/4}}{\sqrt{8\pi k}}\hspace{.3cm}\text{in}\hspace{.3cm}\mathbb{R}^2\hspace{.3cm}\text{and}\hspace{.3cm}\gamma=\frac{1}{4\pi}\hspace{.3cm}\text{in}\hspace{.3cm}\mathbb{R}^3.$$
Here $u^\infty(\hat{x},\hat{y})$ denotes the far-field pattern  depending on the incident direction $\hat{y}$ and the observation direction $\hat{x}$. The far-field pattern for all incident directions defines the far-field operator $F:L^2(\mathbb{S}^{d-1})\longrightarrow L^2(\mathbb{S}^{d-1})$ given by 
$$(Fg)(\hat{x}):=\int_{\mathbb{S}^{d-1}}u^{\infty}(\hat{x},\hat{y})g(\hat{y})\, \text{d}s(\hat{y}) \quad \text{ for } \quad g\in L^2(\mathbb{S}^{d-1}).$$
Here, $\mathbb{S}^{d-1}$ denotes the unit disk/sphere in $\R^d$. 
It is also well-known (see \cite{fmconductbc}) that $F$ is injective with a dense range if and only if there does not exist a {non}trivial solution $(w,v)\in H^1(D)\times H^1(D)$ solving: 
\begin{align}
\Delta w+k^2 n(x) w = 0 \quad \text{ and } \quad  \Delta v+k^2v = 0  \hspace{.2cm}& \text{in} \hspace{.2cm}  D \label{eigproblem1}\\
w=v \quad \text{ and } \quad  \lambda \partial_{\nu}w=\partial_{\nu}v+\eta (x) v \label{eigproblem2} \hspace{.2cm}& \text{on} \hspace{.2cm}  \partial D
\end{align}
where $v$ takes the form of a Herglotz function 
$$v_g({x}):=\int_{\mathbb{S}^{d-1}} \text{e}^{\text{i}k{x}\cdot \hat{y} }g(\hat{y})\, \text{d}s(\hat{y}), \hspace{2cm} g\in L^2(\mathbb{S}^{d-1}).$$

\noindent Now, the values $k\in \mathbb{C}$ for which \eqref{eigproblem1}--\eqref{eigproblem2} has nontrivial solutions are called \textit{transmission eigenvalues}. Due to the fact that, the Herglotz functions are dense in the set of solutions to Helmholtz equation we will consider the transmission eigenvalue problem for any eigenfunction $v \in H^1(D)$. Thus, the goal of this paper is to study this eigenvalue problem as well as {possible applications to} the inverse spectral problem. We first show that if a set of eigenvalues exists, then {this} will be a discrete set.

\section{Discreteness of Eigenvalues}\label{discrete}
In this section, we will study the discreteness of the transmission eigenvalues. In general, sampling methods such as the factorization method \cite{fmconductbc,kirschbook} do not provide valid reconstructions of $D$ if the wave number $k$ is a transmission eigenvalue. Here, we will assume that the conductivity parameters satisfy either: $\lambda \in (1,\infty)$ and $ \eta_{{max}} <0$ or $\lambda\in (0,1)$ and $\eta_{{min}}> 0$. Note, that due to the presence of the parameter $\lambda \neq 1$ in \eqref{eigproblem1}--\eqref{eigproblem2} the discreteness for this problem must be handled differently from the case when $\lambda = 1$ which was proven in \cite{te-cbc}. Here, we will use a different variational formulation to study \eqref{eigproblem1}--\eqref{eigproblem2}. 
To this end, we formulate the transmission eigenvalue problem as the problem for the difference $u:= w-v \in H_0^1(D)$ and $v \in H^1(D)$. By subtracting the equations and boundary conditions for $v$ and $w$ we have that the boundary value problem for $v$ and $u$ is given by
\begin{align}
\lambda (\Delta u+k^2 n u) =(1-\lambda) \Delta v +k^2(1-\lambda n) v \hspace{.2cm}& \text{in} \hspace{.2cm}  D \label{eigproblem3}\\
\lambda \partial_{\nu} u=(1-\lambda)\partial_{\nu}v+\eta v\hspace{.2cm}& \text{on} \hspace{.2cm}  \partial D. \label{eigproblem4}
\end{align}

Now, in order to analyze \eqref{eigproblem3}--\eqref{eigproblem4} we will employ a variational technique. To do so, we use Green's 1st Theorem to obtain that 
\begin{align}
\lambda \int_D \nabla u \cdot \nabla \overline{\phi}-k^2nu\overline{\phi} \, \text{d}x=\int_D (1-\lambda) \nabla v\cdot \nabla \overline{\phi}-k^2(1-\lambda n)v\overline{\phi}\text{d}x  +\int_{\partial D}\eta v\overline{\phi} \, \text{d}s \label{UtoV}
\end{align} 
for all $\phi\in H^1(D).$
In addition, we also need to enforce that $v$ is a solution to Helmholtz's equation in $D$. Therefore, by again appealing to Green's 1st Theorem, we can have that 
\begin{equation}
\int_D \nabla v \cdot \nabla\overline{\psi}\, \text{d}x=\int_D k^2 v\overline{\psi}\, \text{d}x \quad \text{for all} \,\, \psi\in H^1_0(D). \label{helmholtz}
\end{equation}
We now define the following sesquilinear forms $b( \cdot \, ,\cdot ): H^1(D)\times H^1_0(D) \longrightarrow  \C$
$$b(v,\psi)=\int_D\nabla v\cdot \nabla\overline{\psi}\, \text{d}x$$ 
and $a( \cdot \, ,\cdot ): H^1(D)\times H^1(D) \longrightarrow \C$
$$a(v,\phi)=-\frac{1}{\lambda}\int_D (1-\lambda) \nabla v\cdot \nabla \bar{\phi}\, \text{d}x-\frac{1}{\lambda}\int_{\partial D}\eta v \bar{\phi} \, \text{d}s.$$
It is clear that by appealing to the Cauchy-Schwarz inequality and the Trace Theorem that both $a( \cdot \, ,\cdot )$ and $b( \cdot \, ,\cdot )$ are bounded. Defining these sesquilinear forms helps us to {write} \eqref{eigproblem3}--\eqref{eigproblem4} as linear eigenvalue problem for the system
\begin{align}
a(v,\phi)+\overline{b(\phi,u)}&=\int_D k^2nu\overline{\phi}\, \text{d}x-\frac{1}{\lambda}\int_Dk^2(1-\lambda n)v\overline{\phi}\, \text{d}x \label{eigsystem1}\\
b(v,\psi) \hspace{0.62in} &=\int_Dk^2v\overline{\psi}\, \text{d}x. \label{eigsystem2}
\end{align}
In the analysis of the equivalent eigenvalue problem \eqref{eigsystem1}--\eqref{eigsystem2}, we will consider the corresponding source problem. Therefore, we will make the substitution $k^2v=g$ and $k^2u=f$ to define the saddle point problem corresponding to \eqref{eigsystem1}--\eqref{eigsystem2} as  
\begin{align}
a(v,\phi)+\overline{b(\phi,u)}&=\big(f,n\phi\big)_{L^2(D)}+\frac{1}{\lambda} \big(g, (\lambda n-1) \phi \big)_{L^2(D)} \label{sourceproblem1}\\
b(v,\psi) \hspace{0.62in} &=(g,\psi)_{L^2(D)}. \label{sourceproblem2}
\end{align}
It is clear that there exists constants $C_j>0$ for $j=1,2$ such that  
$$\left| \big(f,n\phi\big)_{L^2(D)}+\frac{1}{\lambda} \big(g, (\lambda n-1) \phi \big)_{L^2(D)} \right| \leq C_1 \big\{ \| f \|_{L^2(D)} +  \| g \|_{L^2(D)} \big\} \| \phi \|_{H^1(D)}$$ 
and 
$$ \big| (g,\psi)_{L^2(D)} \big| \leq C_2 \| g \|_{L^2(D)} \| \psi \|_{H^1(D)}$$ 
for all $f \in H^1_0(D)$ and $g \in H^1(D)$ since we have assumed that $n \in L^\infty (D)$.

Now, we will consider the source problem {stated} above as: given $(f,g) \in H_0^1(D) \times H^1(D)$ find $(u,v)\in H_0^1(D) \times H^1(D)$ solving \eqref{sourceproblem1}--\eqref{sourceproblem2}. Notice, that in order to prove wellposedness it is sufficient to prove that the sesquilinear form $a(\cdot \, , \cdot)$ is coercive on $H^1(D)$ and that $b(\cdot \, ,\cdot)$ has the inf--sup condition. Recall, that the inf--sup condition is defined as (see for e.g. \cite{inf-sup})
$$\inf\limits_{\psi \in H^1_0(D)}\sup\limits_{v\in H^1(D)}\frac{b(v,\psi)}{ \| \psi \|_{H^1(D)}\| v \|_{H^1(D)}}\geq \alpha$$
for some constant $\alpha>0$. In the following result, we prove that the sesquilinear forms defined above satisfy the aforementioned properties.

\begin{theorem}\label{seqformprop1}
Assuming that either $\lambda \in (1,\infty)$ and $ \eta_{{max}} <0$ or  $\lambda\in (0,1)$ and $\eta_{{min}}> 0$. Then we have that $a(\cdot, \cdot)$ is coercive on $H^1(D)$. Moreover, we have that $b(\cdot\, ,\cdot)$ satisfies the inf--sup condition.
\end{theorem}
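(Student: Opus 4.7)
The plan is to verify the two claims by direct computation, splitting into the two sign-regimes dictated by the hypotheses.

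For the coercivity of $a(\cdot,\cdot)$, I would start by testing with $\phi=v$ to obtain
$$a(v,v) \;=\; \frac{\lambda-1}{\lambda}\int_D |\nabla v|^2\,\text{d}x \;-\; \frac{1}{\lambda}\int_{\partial D}\eta |v|^2\,\text{d}s.$$
In the first regime ($\lambda>1$ with $\eta_{\max}<0$), both prefactors $(\lambda-1)/\lambda$ and $-\eta/\lambda$ are nonnegative, so $a(v,v)$ is itself nonnegative and bounded below by a positive linear combination of $\|\nabla v\|_{L^2(D)}^2$ and $\|v\|_{L^2(\partial D)}^2$. In the second regime ($0<\lambda<1$ with $\eta_{\min}>0$), both prefactors are now nonpositive, so $-a(v,v)$ (equivalently $|a(v,v)|$) enjoys the same lower bound. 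In either case I would invoke the standard equivalence
$$\|v\|_{H^1(D)}^2 \;\leq\; C\bigl(\|\nabla v\|_{L^2(D)}^2 + \|v\|_{L^2(\partial D)}^2\bigr),$$
which holds on $H^1(D)$ when $\partial D$ is $C^2$ (a Poincaré–Friedrichs type inequality), to conclude that $|a(v,v)|\geq \alpha_0\|v\|_{H^1(D)}^2$ for some $\alpha_0>0$ depending on $\lambda$, $\eta_{\min}$, $\eta_{\max}$ and $D$.

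For the inf--sup condition on $b(\cdot,\cdot)$, the strategy is to take the supremum candidate $v=\psi\in H^1_0(D)\subset H^1(D)$. Then
$$b(\psi,\psi) \;=\; \int_D |\nabla\psi|^2\,\text{d}x \;\geq\; C_P\,\|\psi\|_{H^1(D)}^2$$
by the Poincaré inequality on $H^1_0(D)$. Since $\|v\|_{H^1(D)} = \|\psi\|_{H^1(D)}$ for this choice, dividing yields
$$\sup_{v\in H^1(D)}\frac{b(v,\psi)}{\|v\|_{H^1(D)}\|\psi\|_{H^1(D)}} \;\geq\; \frac{b(\psi,\psi)}{\|\psi\|_{H^1(D)}^2} \;\geq\; C_P,$$
and taking the infimum over $\psi\in H^1_0(D)$ delivers the inf--sup constant $\alpha=C_P$.

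The main subtlety, rather than a genuine obstacle, is organizing the two sign regimes for $a$ so that coercivity is obtained as a lower bound on $|a(v,v)|$ (one must either redefine $a$ up to a global sign or interpret coercivity in the usual complex sense). The inf--sup piece is essentially automatic once one realizes the supremum can be achieved within the smaller space $H^1_0(D)$ via the Poincaré inequality.
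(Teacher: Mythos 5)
Your proof is correct. The coercivity half is essentially identical to the paper's argument: the same identity for $a(v,v)$ (the paper writes it as $\lambda a(v,v) = (\lambda-1)\int_D|\nabla v|^2\,\mathrm{d}x - \int_{\partial D}\eta|v|^2\,\mathrm{d}s$), the same sign analysis in the two regimes, and the same appeal to the equivalence of $\|\cdot\|_{H^1(D)}^2$ with $\|\nabla\cdot\|_{L^2(D)}^2 + \|\cdot\|_{L^2(\partial D)}^2$ to land on $|a(v,v)|\geq C\|v\|_{H^1(D)}^2$. Where you genuinely diverge is the inf--sup condition: the paper does not take $v=\psi$ directly, but instead constructs an auxiliary $v_\psi\in H^1(D)$ as the Lax--Milgram solution of the Robin-type problem $\int_D\nabla v_\psi\cdot\nabla\overline{\phi}\,\mathrm{d}x + \int_{\partial D}v_\psi\overline{\phi}\,\mathrm{d}s = \int_D\nabla\psi\cdot\nabla\overline{\phi}\,\mathrm{d}x$, verifies $\|v_\psi\|_{H^1(D)}\leq C\|\psi\|_{H^1(D)}$, and then tests with $\phi=\psi$ to get $b(v_\psi,\psi)=\int_D|\nabla\psi|^2\,\mathrm{d}x$ --- exactly the same quantity your direct choice $v=\psi$ produces with no auxiliary problem and no well-posedness check. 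Since $H^1_0(D)\subset H^1(D)$, your shortcut is legitimate and strictly more economical; the paper's detour buys nothing for this particular pair of spaces (it would only be necessary if the test space for $v$ did not contain $H^1_0(D)$). Your closing remark about interpreting coercivity as a lower bound on $|a(v,v)|$ is also exactly how the paper states its conclusion.
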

\begin{proof}
We first show that  $a(\cdot \, , \cdot)$ is coercive and we choose to present the case where we assume that $\lambda\in (1,\infty)$ and $\eta_{\text{max}} <0$. From this, we can now estimate 
 \begin{align*}
    \lambda a(v,v)&=-\int_D (1-\lambda) |\nabla v|^2 \, \text{d}x-\int_{\partial D}\eta |v|^2 \, \text{d}s\\
    	&\geq (\lambda-1)\int_D |\nabla v|^2 \, \text{d}x-{\eta_{{max}}}\int_{\partial D} |v|^2 \, \text{d}s\\
    	&\geq \min \left\{{(\lambda-1)}, \left| {\eta_{{max}}} \right|  \right\} \left(\int_D |\nabla v|^2 \, \text{d}x+\int_{\partial D} |v|^2 \, \text{d}s \right).
\end{align*}
Now, we can use the fact that 
$$\| \cdot \|_{H^1(D)}^2 \quad \text{is equivalent to } \quad \int_D |\nabla\cdot|^2 \, \text{d}x+\int_{\partial D} |\cdot|^2 \, \text{d} s,$$
(see for e.g. \cite{salsa} Chapter 8) to obtain the estimate  
 $$| a(v,v) | \geq C \| v \|_{H^1(D)}^2\hspace{.5cm}\text{for some}\hspace{.5cm} C>0.$$ 
This proves the coercivity for the case when $\lambda \in (1,\infty)$ and $ \eta_{{max}} <0$. The case when $\lambda\in (0,1)$ and $\eta_{{min}}> 0$ can be handled in a similar manner.

In order to show that the sesquilinear form $b(\cdot \, ,\cdot)$ satisfies the inf--sup condition, we will use an equivalent definition. Recall, that the  inf--sup condition is equivalent to showing that for any $\psi \in H^1_0(D)$ there exists $v_{\psi} \in H^1(D)$ such that  
$$b(v_{\psi},\psi) \geq \beta \| \psi \|_{H^1(D)}^2$$ 
where  $\| v_{\psi} \|_{H^1(D)} \leq C\| \psi \|_{H^1(D)}$ for some constant $\beta>0$ that is independent of $\psi$. To this end, we define $v_{\psi} \in H^1(D)$ to be the solution of the variational problem 
\begin{align}
\int_D \nabla v_{\psi}\cdot \nabla\overline{\phi} \, \text{d}x+\int_{\partial D}v_{\psi}\overline{\phi} \, \text{d}s=\int_D \nabla\psi\cdot\nabla\overline{\phi} \, \text{d}x \label{infsup}
\end{align} 
for all $\phi\in H^1(D)$. By appealing to the norm equivalence stated above and the Lax-Milgram, we have that the mapping $\psi \longmapsto v_\psi$ solving \eqref{infsup}
is a well defined and bounded linear operator from $H^1_0(D)$ to $H^1(D)$ . Therefore, we have that letting $\phi=\psi$ in \eqref{infsup} gives  
$$b(v_{\psi},\psi)=\int_D \nabla v_{\psi}\cdot \nabla\overline{\psi}  \, \text{d}x=\int_D | \nabla\psi |^2  \, \text{d}x \geq \beta  \| \psi \|^2_{H^1(D)}$$
by the Poincar\'{e} inequality. Note, that we have used the fact that $\psi$ has zero trace on the boundary $\partial D$. Thus, we have that $b(\cdot \, ,\cdot)$ satisfies the inf--sup condition. 
\end{proof}

From Theorem \ref{seqformprop1} and the analysis in \cite{inf-sup} we have that \eqref{sourceproblem1}--\eqref{sourceproblem2} is wellposed. Therefore, we can define the bounded linear operator 
$$ T :H_0^1(D)\times H^1(D)\longrightarrow H_0^1(D)\times H^1(D) \quad \text{such that } \quad T(f,g) = (u,v).$$
By the wellposedness and the estimates on the $L^2(D)$ integrals on the right hand side of \eqref{sourceproblem1}--\eqref{sourceproblem2} we have that for some $C>0$ 
$$\| T(f,g) \|_{H^1(D) \times H^1(D)} =\| (u,v) \|_{H^1(D) \times H^1(D)} \leq C  \big\{ \| f \|_{L^2(D)} +  \| g \|_{L^2(D)} \big\}.$$
Now, we have {the necessary requirements} to prove that the solution operator $T$ is compact using the Rellich--Kondrachov Embedding Theorem. 

\begin{theorem}\label{Tcompact}
Assuming that either $\lambda \in (1,\infty)$ and $ \eta_{\text{max}} <0$ or $\lambda\in (0,1)$ and $\eta_{\text{min}}> 0$.
Then the solution operator $T:H_0^1(D)\times H^1(D)\longrightarrow H_0^1(D)\times H^1(D)$ corresponding to  \eqref{sourceproblem1}--\eqref{sourceproblem2} is compact.
\end{theorem}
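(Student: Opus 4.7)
My plan is to derive compactness of $T$ from the \emph{regularity gap} already visible in the stability estimate stated just before the theorem: although $T$ is defined on $H_0^1(D) \times H^1(D)$, the bound $\|T(f,g)\|_{H^1(D) \times H^1(D)} \leq C\bigl(\|f\|_{L^2(D)} + \|g\|_{L^2(D)}\bigr)$ only involves the $L^2$ norms of the data. Since the saddle-point problem \eqref{sourceproblem1}--\eqref{sourceproblem2} is linear in $(f,g)$, the operator $T$ is linear, and combining this estimate with the compact embedding $H^1(D) \hookrightarrow L^2(D)$ furnished by the Rellich--Kondrachov theorem should give compactness by a routine Cauchy-sequence argument.

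Concretely, I would take an arbitrary bounded sequence $\{(f_k,g_k)\} \subset H_0^1(D) \times H^1(D)$ and apply Rellich--Kondrachov to each component (using that $H_0^1(D)$ is a closed subspace of $H^1(D)$, so the compact embedding still holds) to extract a subsequence, still indexed by $k$, with $f_k \to f^*$ and $g_k \to g^*$ strongly in $L^2(D)$. In particular both sequences are Cauchy in $L^2(D)$.

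Using linearity of $T$ and the a priori estimate recalled above, I would then write
$$\|T(f_k,g_k) - T(f_j,g_j)\|_{H^1(D)\times H^1(D)} = \|T(f_k-f_j,\,g_k-g_j)\|_{H^1(D)\times H^1(D)} \leq C\bigl\{\|f_k-f_j\|_{L^2(D)} + \|g_k-g_j\|_{L^2(D)}\bigr\},$$
which tends to $0$ as $j,k\to\infty$. Thus $\{T(f_k,g_k)\}$ is Cauchy in the complete space $H_0^1(D)\times H^1(D)$ and hence convergent, which is exactly compactness of $T$.

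I do not anticipate any genuine obstacle: the difficult analytic work has already been carried out in Theorem \ref{seqformprop1}, whose coercivity and inf--sup conclusions (together with the framework of \cite{inf-sup}) gave well-posedness of the source problem and, crucially, an a priori bound that only sees the data through their $L^2$ norms. The only subtlety worth flagging is that this $L^2$ dependence must be read off carefully from the right-hand sides $(f,n\phi)_{L^2(D)}$ and $(g,(\lambda n -1)\phi)_{L^2(D)}/\lambda$ and from $(g,\psi)_{L^2(D)}$; since $n \in L^{\infty}(D)$, the boundedness constants $C_1,C_2$ introduced earlier are uniform in $(f,g)$, so the compactness argument goes through immediately.
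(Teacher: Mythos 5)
Your proposal is correct and rests on exactly the same two ingredients as the paper's proof: the a priori bound $\|T(f,g)\|_{H^1(D)\times H^1(D)} \leq C\{\|f\|_{L^2(D)}+\|g\|_{L^2(D)}\}$, which sees the data only through their $L^2$ norms, combined with the Rellich--Kondrachov compact embedding. The only cosmetic difference is that you verify compactness via the bounded-sequence/Cauchy-subsequence characterization while the paper tests $T$ on sequences converging weakly to zero; these are equivalent for a bounded linear operator, so the arguments are essentially identical.
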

\begin{proof}
To prove the claim, we will show that for any sequence $(f_j ,  g_j)$ weakly converging to zero in $H_0^1(D)\times H^1(D)$ then the image $T(f_j , g_j)$ has a subsequence that converges strongly to zero in $H_0^1(D)\times H^1(D)$. Notice, that there exists a subsequence (still denoted with $j$) that satisfies  
$$\| f_j \|_{L^2(D)} +  \| g_j \|_{L^2(D)} \to 0 \quad \text{as} \quad j \to \infty$$
by the compact embedding of $H^1(D)$ in $L^2(D)$ see \cite{evans}. 
From this, we have that  
$$\| T(f_j , g_j) \|_{H^1(D) \times H^1(D)} \leq C  \big\{ \| f_j \|_{L^2(D)} +  \| g_j \|_{L^2(D)} \big\}\to 0 \quad \text{as} \quad j \to \infty$$
which proves the claim. 
\end{proof}


Now, simple calculations show that the relationship between the eigenvalues of $T$ and the transmission eigenvalues $k$ is that $1/k^2 \in \sigma(T)$, where $\sigma(T)$ is the spectrum of the operator $T$. Therefore, we have related the transmission eigenvalues to the eigenvalues of a compact operator. We can use the compactness of $T$ to prove the following result for the set of transmission eigenvalues independent of the sign of the contrast $n-1$. 
\begin{theorem}\label{TEdiscrete}
Assuming that either $\lambda \in (1,\infty)$ and $ \eta_{\text{max}} <0$ or  $\lambda\in (0,1)$ and $\eta_{\text{min}}> 0$.
Then the set of transmission eigenvalues is discrete with no finite accumulation point.
\end{theorem}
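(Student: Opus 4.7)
The plan is to reduce the discreteness claim to the Riesz--Schauder spectral theorem applied to the compact operator $T$ furnished by Theorem~\ref{Tcompact}. The key preliminary step is to sharpen the remark preceding the theorem into a rigorous bijection: $k \in \mathbb{C} \setminus \{0\}$ is a transmission eigenvalue of \eqref{eigproblem1}--\eqref{eigproblem2} if and only if $1/k^2$ is a nonzero eigenvalue of $T$ with nontrivial eigenfunction $(u,v) \in H_0^1(D) \times H^1(D)$.

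To establish this correspondence, I would start from a nontrivial pair $(w,v)$ solving \eqref{eigproblem1}--\eqref{eigproblem2}, define $u := w - v \in H_0^1(D)$ using the matching trace $w = v$ on $\partial D$, and verify via Green's identities (the derivation leading to \eqref{UtoV}) that $(u,v)$ satisfies the variational system \eqref{eigsystem1}--\eqref{eigsystem2}. Setting $f = k^2 u$ and $g = k^2 v$ in the source problem \eqref{sourceproblem1}--\eqref{sourceproblem2} and appealing to the linearity of $T$ then gives $T(u,v) = (1/k^2)(u,v)$, with $(u,v) \neq (0,0)$ since otherwise $w = u + v = 0$ would contradict nontriviality of $(w,v)$. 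Conversely, from an eigenpair $(u,v)$ of $T$ at $1/k^2$, the pair $(w,v)$ with $w := u + v \in H^1(D)$ recovers \eqref{eigproblem1}--\eqref{eigproblem2} by unwinding the variational system.

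With this equivalence, the proof concludes by invoking Riesz--Schauder for the compact operator $T$: the spectrum of $T$ consists of $0$ together with an at most countable set of eigenvalues of finite algebraic multiplicity whose only possible accumulation point is $0$. Consequently, $\{1/k^2 : k \text{ is a transmission eigenvalue}\} \subset \{z : 0 < |z| \leq \|T\|\}$ accumulates only at $0$, so under the map $z \mapsto 1/\sqrt{z}$ the transmission eigenvalues form a discrete subset of $\mathbb{C}$ with no finite accumulation point. The main obstacle is the equivalence step rather than the abstract spectral argument: one must carefully confirm that the weak formulation \eqref{eigsystem1}--\eqref{eigsystem2} encodes both the Helmholtz equation for $v$ (recovered by testing \eqref{eigsystem2} with $\psi \in H_0^1(D)$) and the conductive boundary condition (recovered from \eqref{eigsystem1} via test functions $\phi \in H^1(D)$ with nonzero trace), and that passage between the strong and weak forms preserves nontriviality.
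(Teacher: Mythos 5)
Your proposal is correct and follows essentially the same route as the paper: the paper's proof likewise maps each transmission eigenvalue $k$ to $1/k^2\in\sigma(T)$ and invokes the discreteness of the spectrum of the compact operator $T$ from Theorem~\ref{Tcompact}, leaving the correspondence as the ``simple calculations'' mentioned just before the theorem. Your more careful treatment of that equivalence (and of the boundedness of $\sigma(T)$ ruling out accumulation at $k=0$) simply fills in details the paper omits.
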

\begin{proof}
This is a consequence of the fact that $k$ is a transmission eigenvalue implies that $1/k^2 \in \sigma(T)$. Then we exploit that the set $\sigma(T)$ is a discrete set with zero its only possible accumulation point.
\end{proof}

An important question is whether or not the operator $T$ is self-adjoint. If so, we would have existence of real transmission eigenvalues by appealing to the Hilbert-Schmidt Theorem. {In a similar way with other} transmission eigenvalue problems we have that the operator $T$ is not self-adjoint even when the material parameters are real-valued. To see this fact, we can consider the transmission eigenvalue problem for the unit disk in $\R^2$ with constant coefficients $\lambda$, $\eta$ and $n$.
\begin{example}\label{ex1}
Using separation of variables we have that $k$ is a transmission eigenvalue provided that $d_m(k) = 0$ for any $m \in \Z$ where 
$$d_m(k):=\text{det} \begin{pmatrix}
J_m(k\sqrt{n}) &-J_m(k)\\
\lambda J^{'}_{m}(k\sqrt{n})k\sqrt{n} & - \big(kJ^{'}_{m}(k)+\eta J_m(k)\big)
\end{pmatrix}$$
and $J_m(t)$ are the Bessel functions of the first kind of order $m$ (see Section \ref{numerics} for details). Therefore, we can plot $|d_0(k)|$ for complex-valued $k$ and determine if there are any complex roots. This is done in Figure \ref{TEcomplex} using  $\lambda=2$, $n=4$, and $\eta=-\frac{1}{100}$. We see complex roots at the values $k=2.2032 \pm 0.2905\mathrm{i}$ as well as other points in the set $[0, 10]\times[-1, 1]\mathrm{i}$.

\begin{figure}[ht]
\centering 
\includegraphics[width=10cm]{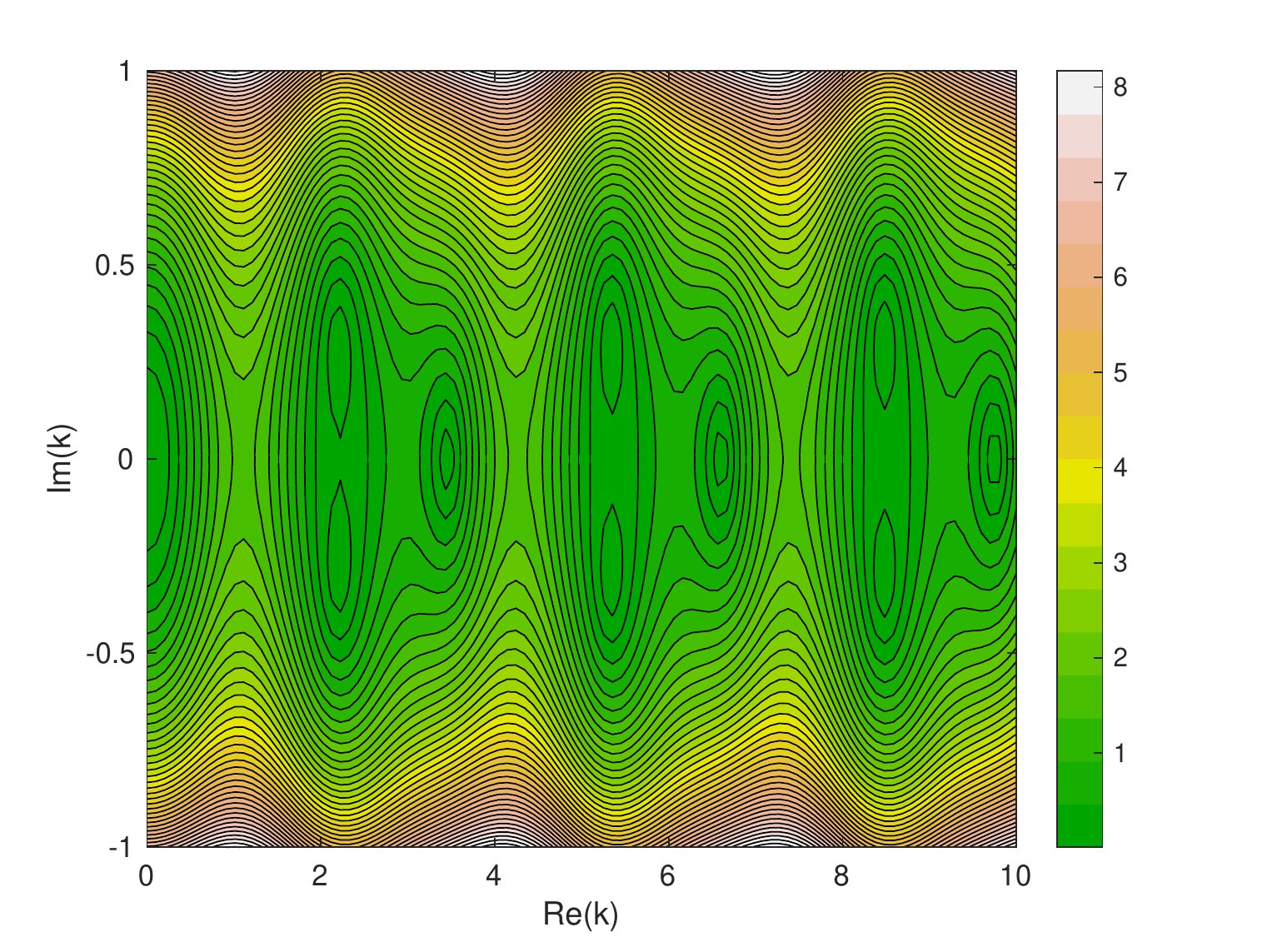}
\caption{Contour plot of $|d_0(k)|$ on the set $[0, 10]\times[-1, 1]\text{i}$ in the complex plane where the parameters are  $\lambda=2$, $n=4$ and $\eta=-\frac{1}{100}$.}
\label{TEcomplex}
\end{figure}

More precisely, we obtain ten interior transmission eigenvalues within the given set for $m=0$ with \texttt{MATLAB}. They are given to high accuracy as $0.053410$, $2.203160\pm 0.290468\mathrm{i}$, $3.456704$, $5.338551 \pm 0.305549\mathrm{i}$, $6.606526$, $8.477827\pm 0.309699\mathrm{i}$, and $9.750981$. 
\end{example}

%

From this, we see that there are multiple complex transmission eigenvalues $k$ for this set of parameters. {As a result,} for this simple example, $T$ has complex eigenvalues since $1/k^2 \in \sigma(T)$ and cannot be self-adjoint. Therefore, we can not rely on standard theory to prove the existence of the transmission eigenvalues. The existence is proven in the next section where we use similar analysis as in \cite{C and K}. These techniques are usually used for anisotropic materials. This analysis is {utilized} due to the fact that the techniques in \cite{te-cbc} fail to give a variational formulation for the eigenfunction $u=w-v$ exclusively.

\section{Existence of Transmission Eigenvalues}\label{exist}
\noindent In this section, we will show the existence of the transmission eigenvalues with conductive boundary parameters following a similar analysis as \cite{C and K}. In our analysis, we will {furthermore} assume that $\lambda \in (1,\infty)$, $ \eta_{max} <0$, and $\lambda n_{max}-1<0$, or $\lambda\in (0,1)$, $\eta_{min}> 0$, and $\lambda n_{min}-1>0$. The goal now is to show the existence of real transmission eigenvalues. To this end, we work with the formulated problem \eqref{eigproblem3}--\eqref{eigproblem4} and the variational formulation \eqref{UtoV} 
\begin{align*}
\lambda\int_D \nabla u \cdot \nabla \overline{\phi}-k^2nu\overline{\phi} \, \text{d}x=\int_D (1-\lambda) \nabla v\cdot \nabla \overline{\phi}-k^2(1-\lambda n)v\overline{\phi} \, \text{d}x+\int_{\partial D}\eta v\overline{\phi}  \, \text{d}s
\end{align*}
for all $\phi\in H^1(D).$ Following the analysis in \cite{C and K}, we will consider \eqref{eigproblem3}--\eqref{eigproblem4} as a Robin boundary value problem for $v \in H^1(D)$. This means that for a given  $u\in H^1_0(D)$ we need to show that there exists a  $v \in H^1(D)$ satisfying \eqref{UtoV} . We now define the bounded sesquilinear form and the bounded conjugate linear functional from the variational formulation as
$$A(v, \phi)=\int_{\partial D}\eta v \bar{\phi}  \, \text{d}s + \int_D (1-\lambda) \nabla v\cdot \nabla \bar{\phi}-k^2(1-\lambda n)v\bar{\phi}  \, \text{d}x$$
and 
$$\ell(\phi) =\lambda\int_D \nabla u \cdot \nabla \bar{\phi} -k^2nu\bar{\phi}  \, \text{d}x.$$
Applying the Lax-Milgram Lemma to $A(v, \phi)=\ell(\phi)$ gives us that \eqref{eigproblem3}--\eqref{eigproblem4}  is wellposed i.e. there exists a unique solution $v\in H^1(D)$  satisfying \eqref{eigproblem3}--\eqref{eigproblem4} for any given $u\in H^1_0(D)$. Notice, that the coercivity result for  $A(v, \phi)$ is proven in a similar manner as $a(\cdot,\cdot)$ in Section \ref{discrete}. This says that the mapping we have $u \longmapsto v_u$ from $H_0^1(D)$ to $H^1(D)$ is a bounded linear operator. Because the transmission eigenfunction $v$ solves the Helmholtz equation in $D$, we make sure that $v_u$ is also a solution of the Helmholtz equation in the variational sense. To this end, we use the Riesz Representation Theorem to define $\mathbb{L}_k u$ by 
\begin{equation}
\big(\mathbb{L}_k u, \psi \big)_{H^1(D)}=\int_D\nabla v_u\cdot \nabla\overline{\psi}-k^2 v_u\overline{\psi} \, \text{d}x  \hspace{.4cm}\forall \psi\in H^1_0(D).
\end{equation}
Notice, that $\mathbb{L}_k u=0$ if and only if $v_u$ solves the Helmholtz equation.

We will analyze the null-space of the operator $\mathbb{L}_k:H_0^1(D)\longrightarrow H_0^1(D)$ and connect this to the set of transmission eigenfunctions. To this end, we show that $\mathbb{L}_k$ having a non-trival null-space for a given value of $k$ is equivalent to the transmission eigenvalue problem  \eqref{eigproblem1}--\eqref{eigproblem2}.

\begin{theorem}
 Assume that either  $\lambda \in (1,\infty)$, $ \eta_{max} <0$, and $\lambda n_{max}-1<0$ or $\lambda\in (0,1)$, $\eta_{min}> 0$, and $\lambda n_{min}-1>0$. If $(v,w) \in H^1(D) \times H^1(D)$ are non-trivial solutions of \eqref{eigproblem1}--\eqref{eigproblem2} then the non-trivial  $u=w-v \in H^1_0 (D)$ satisfies that $\mathbb{L}_k u=0$. Conversely, if for a given value of $k$ we have that $\mathbb{L}_k u=0$ for a non-trivial $u \in H^1_0(D)$, then $v_u \in H^1(D)$ solving \eqref{eigproblem3}--\eqref{eigproblem4} and $w=u+v_u$ are non-trivial solutions of \eqref{eigproblem1}--\eqref{eigproblem2}.
\end{theorem}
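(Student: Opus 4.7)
The plan is to translate both directions into direct computations using two ingredients already set up: $v_u$ is the unique $H^1(D)$-solution of $A(v,\phi)=\ell(\phi)$ for all $\phi\in H^1(D)$, and by its Riesz-representation definition $\mathbb{L}_k u=0$ is equivalent to $v_u$ weakly solving $\Delta v_u+k^2 v_u=0$ in $D$. Granting these, the theorem becomes bookkeeping between the pair $(v,w)$ and the pair $(u,v)$, plus a single non-triviality check where the sign hypothesis enters.

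\emph{Forward direction.} Let $(v,w)$ be a non-trivial solution of \eqref{eigproblem1}--\eqref{eigproblem2}. The continuity $w=v$ on $\partial D$ places $u:=w-v$ in $H^1_0(D)$, and subtracting the interior equations and boundary identities recovers precisely \eqref{eigproblem3}--\eqref{eigproblem4} for $(u,v)$. The weak form of this system reads $A(v,\phi)=\ell(\phi)$ for all $\phi\in H^1(D)$, so uniqueness of the Robin problem gives $v=v_u$. Since $v$ solves the Helmholtz equation in $D$, Green's first identity yields $\int_D \nabla v\cdot\nabla\bar\psi-k^2 v\bar\psi\,\text{d}x=0$ for every $\psi\in H^1_0(D)$, which is exactly $(\mathbb{L}_k u,\psi)_{H^1(D)}=0$, hence $\mathbb{L}_k u=0$. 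For non-triviality of $u$, if $u\equiv 0$ then $w=v$ and subtracting the two interior equations gives $k^2(n-1)v=0$ in $D$; under the standing hypothesis either $n\leq n_{\max}<1/\lambda<1$ or $n\geq n_{\min}>1/\lambda>1$, so $n\neq 1$ almost everywhere, forcing $v\equiv w\equiv 0$ and contradicting non-triviality of the pair.

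\emph{Converse direction.} Given non-trivial $u\in H^1_0(D)$ with $\mathbb{L}_k u=0$, set $v:=v_u$ and $w:=u+v\in H^1(D)$. Reading $\mathbb{L}_k u=0$ against $\psi\in H^1_0(D)$ gives $\Delta v+k^2 v=0$ weakly in $D$. On $\partial D$, the zero trace of $u$ yields $w=v$, and the boundary condition in \eqref{eigproblem4} rearranges, after adding $\lambda\partial_\nu v$ to both sides, to $\lambda\partial_\nu w=\partial_\nu v+\eta v$. For the interior equation for $w$, I substitute $\Delta v=-k^2 v$ into \eqref{eigproblem3} to obtain $\Delta u+k^2 nu=k^2(1-n)v$, and adding $\Delta v+k^2 nv=k^2(n-1)v$ yields $\Delta w+k^2 nw=0$. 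The pair $(v,w)$ is non-trivial because $u=w-v\not\equiv 0$, so they cannot both vanish.

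The only step in the whole argument that is not strictly algebraic is the non-triviality of $u$ in the forward direction, and that is where I would be most careful: it is the single place where the sign assumptions on $\lambda n-1$ play an essential role. A secondary organisational point is to keep straight which test-function space each variational identity is posed on, since $A$ and $\ell$ are tested on all of $H^1(D)$ while the Riesz pairing defining $\mathbb{L}_k$ uses only $\psi\in H^1_0(D)$; once that is untangled the equivalence is immediate from the manipulations above.
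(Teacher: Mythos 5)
Your proof is correct and follows essentially the same route as the paper: the forward direction is exactly the construction of $\mathbb{L}_k$ (subtract the equations, identify $v=v_u$ by uniqueness of the Robin problem, and read off $(\mathbb{L}_k u,\psi)_{H^1(D)}=0$), and the converse is the same bookkeeping with $v:=v_u$ and $w:=u+v$. You in fact go one step further than the paper, which dismisses the forward direction as ``given by our construction'': your check that $u=w-v$ is non-trivial --- via $k^2(n-1)v=0$ and the observation that the sign hypotheses on $\lambda n-1$ force $n\neq 1$ a.e. --- is a genuine detail the paper leaves implicit, and it is the one place the standing assumptions enter the forward direction.
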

\begin{proof}
The first part of the theorem is given by our construction. Conversely, we assume $\mathbb{L}_k u=0$ for a given value of $k$ provided that $u \neq 0$ and we let $v=v_u\in H^1(D)$ be the unique solution to  \eqref{eigproblem3}--\eqref{eigproblem4}, then define $w=u+v \in H^1(D)$. From equation \eqref{eigproblem3} along with the fact that $\mathbb{L}_k u=0$ gives that  
$$\Delta v +k^2 v=0 \quad \text{ and} \quad \Delta w+k^2n w=0 \quad \text{ in } \,\, D.$$ 
Similarly, {from} the boundary condition \eqref{eigproblem4} given by  $\lambda\partial_{\nu}u = (1-\lambda)\partial_{\nu}v+\eta v$ on $\partial D$ and using the identity $w=u+v$ we can easily obtain that 
$$ \lambda \partial_\nu w =  \partial_\nu v + \eta v \quad \text{on } \,\, \partial D.$$
This proves the claim since $ u \in H^1_0(D)$. 
\end{proof}

We have shown that there exists transmission eigenvalues if and only if the null-space of $\mathbb{L}_k$ is non-trivial. Therefore, we turn our attention to studying this operator. Now, we are going to highlight some properties of the operator $\mathbb{L}_k$ that will help us establish when $\mathbb{L}_k$ has a trivial null-space. From here on, we denote $v_u:=v$.

\begin{theorem}\label{Thm4.2}
Assume that either  $\lambda \in (1,\infty)$, $ \eta_{max} <0$, and $\lambda n_{max}-1<0$ or $\lambda\in (0,1)$, $\eta_{min}> 0$, and $\lambda n_{min}-1>0$. Then, we have the following:
\begin{enumerate}
\item the operator $\mathbb{L}_k:H^1_0(D)\longrightarrow H^1_0(D)$ is self-adjoint,
\item the operator $-\mathbb{L}_0$ or $\mathbb{L}_0$ is coercive when $\lambda\in(1,\infty)$ or $\lambda\in (0,1)$, respectively. 
\item and the operator  $\mathbb{L}_k-\mathbb{L}_0$ is compact. 
\end{enumerate}
\end{theorem}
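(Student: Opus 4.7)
The plan is to verify the three properties of $\mathbb{L}_k$ in turn by exploiting the variational definition $A(v_u,\phi)=\ell(\phi)$ of $v_u$ together with the Riesz formula $(\mathbb{L}_k u,\psi)_{H^1(D)}=\int_D \nabla v_u\cdot\nabla\overline{\psi} - k^2 v_u\overline{\psi}\,\text{d}x$.

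For self-adjointness (item 1), I would first write the defect $(\mathbb{L}_k u,\psi)_{H^1(D)}-(u,\mathbb{L}_k\psi)_{H^1(D)}$ as an explicit integral involving the two differences $[\nabla v_u\cdot\nabla\overline{\psi}-\nabla u\cdot\nabla\overline{v_\psi}]$ and $[v_u\overline{\psi}-u\overline{v_\psi}]$. Then I would generate two independent Green-type identities from the defining equation for $v_u$. First, testing the $v_u$-equation with $\phi=v_\psi$ and the $v_\psi$-equation with $\phi=v_u$, conjugating the second and subtracting, cancels every $A$-term (using that $A$ is conjugate-symmetric in the real-parameter regime) and yields an identity $(\star)$ with weight $n$:
\[
\int_D[\nabla v_u\cdot\nabla\overline{\psi}-\nabla u\cdot\nabla\overline{v_\psi}]\,\text{d}x = k^2\int_D n\,[v_u\overline{\psi}-u\overline{v_\psi}]\,\text{d}x.
\]
Second, testing instead with $\phi=\psi\in H_0^1(D)$ and $\phi=u\in H_0^1(D)$ (which makes the $\eta$-boundary integrals vanish) and subtracting, yields an identity $(\star\star)$ with weight $(1-\lambda n)$ and prefactor $(1-\lambda)$. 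Substituting $(\star)$ into $(\star\star)$ and using the algebraic identity $(1-\lambda)n-(1-\lambda n)=n-1$ forces $\int_D(n-1)[v_u\overline{\psi}-u\overline{v_\psi}]\,\text{d}x=0$; plugging this back along with $(\star)$ into the defect formula kills it.

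For coercivity of $\mp\mathbb{L}_0$ (item 2), I would test the variational equation for $v_u$ at $k=0$ with $\phi=u\in H_0^1(D)$. The $\eta$-boundary integral vanishes because $u|_{\partial D}=0$, and the equation collapses to
\[
(\mathbb{L}_0 u,u)_{H^1(D)}=\int_D\nabla v_u\cdot\nabla\overline{u}\,\text{d}x=\frac{\lambda}{1-\lambda}\,\|\nabla u\|_{L^2(D)}^2.
\]
Poincar\'e's inequality on $H_0^1(D)$ makes this equivalent to a multiple of $\|u\|_{H^1(D)}^2$, while the sign of $\lambda/(1-\lambda)$ is negative for $\lambda\in(1,\infty)$ and positive for $\lambda\in(0,1)$, which is precisely the dichotomy stated in the theorem.

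For compactness (item 3), set $w_u:=v_u^k-v_u^0$ and subtract the two defining variational equations to obtain $A_k(w_u,\phi)=k^2\int_D[(1-\lambda n)v_u^0-\lambda n u]\overline{\phi}\,\text{d}x$ for all $\phi\in H^1(D)$, where $A_k$ is the full sesquilinear form at wavenumber $k$. The sign hypotheses on $\lambda,\eta,\lambda n-1$ make $\pm A_k$ coercive on $H^1(D)$ by exactly the estimate used in Theorem~\ref{seqformprop1}, so Lax-Milgram yields $\|w_u\|_{H^1(D)}\leq C\{\|v_u^0\|_{L^2(D)}+\|u\|_{L^2(D)}\}$. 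For a weakly null sequence $u_j\rightharpoonup 0$ in $H_0^1(D)$, continuity of $u\mapsto v_u^0$ into $H^1(D)$ together with Rellich-Kondrachov gives $u_j,v_{u_j}^0\to 0$ in $L^2(D)$, hence $w_{u_j}\to 0$ in $H^1(D)$ and $v_{u_j}^k\to 0$ in $L^2(D)$. Testing the defect against itself in $H_0^1(D)$ produces the bound $\|(\mathbb{L}_k-\mathbb{L}_0)u_j\|_{H^1(D)}\leq \|w_{u_j}\|_{H^1(D)}+k^2\|v_{u_j}^k\|_{L^2(D)}\to 0$, giving strong convergence and hence compactness. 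The main obstacle lies in item~1: neither $(\star)$ nor $(\star\star)$ alone kills the defect, and one must recognise that only their combination produces the crucial vanishing integral $\int_D(n-1)[v_u\overline{\psi}-u\overline{v_\psi}]\,\text{d}x=0$; items~2 and~3 then follow from routine Lax-Milgram and Rellich arguments.
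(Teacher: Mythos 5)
Your proposal is correct, and items (2) and (3) follow essentially the paper's own argument. For compactness the paper likewise subtracts the two variational identities for $v_u^k$ and $v_u^0$, tests with the difference, uses the coercivity of the $k$-independent part of the form together with the norm equivalence of $\|\cdot\|^2_{H^1(D)}$ with $\int_D|\nabla\cdot|^2\,\mathrm{d}x+\int_{\partial D}|\cdot|^2\,\mathrm{d}s$, and finishes with the Rellich embedding, exactly as you do. For coercivity the paper works with $w=v+u$ and only records the inequalities $-(\mathbb{L}_0u,u)_{H^1(D)}\geq\|\nabla u\|^2_{L^2(D)}$ (for $\lambda>1$) and $(\mathbb{L}_0u,u)_{H^1(D)}\geq\lambda\|\nabla u\|^2_{L^2(D)}$ (for $\lambda\in(0,1)$); your exact identity $(\mathbb{L}_0u,u)_{H^1(D)}=\frac{\lambda}{1-\lambda}\|\nabla u\|^2_{L^2(D)}$, obtained by taking $\phi=u$ so the $\eta$-boundary term drops, is consistent with and slightly sharper than both bounds, and makes the sign dichotomy transparent.

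The genuine divergence is in item (1). The paper never computes the defect $(\mathbb{L}_ku,\psi)_{H^1(D)}-(u,\mathbb{L}_k\psi)_{H^1(D)}$: it only tests the variational identity with $\phi=u$ and $\phi=v$ to rewrite the quadratic form $(\mathbb{L}_ku,u)_{H^1(D)}$ as a sum of manifestly real integrals, and then invokes the standard fact that a bounded operator on a complex Hilbert space whose quadratic form is real-valued is self-adjoint. Your polarized argument checks out: testing the $v_u$- and $v_\psi$-equations against each other's solutions and using the Hermitian symmetry of $A$ does yield $(\star)$, testing against $u,\psi\in H^1_0(D)$ makes the two right-hand sides exact complex conjugates so that they cancel and give $(\star\star)$, and the combination through $(1-\lambda)n-(1-\lambda n)=n-1$ forces $\int_D(n-1)\bigl[v_u\overline{\psi}-u\overline{v_\psi}\bigr]\,\mathrm{d}x=0$, which together with $(\star)$ annihilates the defect. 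What your route buys is a direct verification of bilinear symmetry that does not rely on the complex-Hilbert-space characterization; what it costs is the extra cancellation bookkeeping, which the paper's quadratic-form shortcut avoids entirely. Both arguments are valid.
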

\begin{proof}
(1) Now we show that the operator $\mathbb{L}_k$ is self-adjoint. To this end, it is enough to show that the quantity 
$$(\mathbb{L}_ku, u)_{H^1(D)}=\int_D\nabla v\cdot \nabla\overline{u}-k^2 v\overline{u} \, \text{d}x $$
is real-valued for all $u$ (see for e.g. \cite{axler}). 
Recall, the variational formulation given by \eqref{UtoV} 
$$\lambda\int_D \nabla u \cdot \nabla \bar{\phi} -k^2nu\bar{\phi}  \, \text{d}x=\int_{\partial D}\eta v \bar{\phi}  \, \text{d}s +\int_D (1-\lambda) \nabla v\cdot \nabla \bar{\phi}-k^2(1-\lambda n)v\bar{\phi}  \, \text{d}x$$
for any $\phi \in H^1(D)$. Letting $\phi=u$ in \eqref{UtoV} implies that
\begin{equation}
 \lambda\int_D |\nabla u|^2-k^2n|u|^2  \, \text{d}x= \int_D (1-\lambda) \nabla v\cdot \nabla \bar{u}-k^2(1-\lambda n)v\overline{u} \label{phi=u}  \, \text{d}x.
 \end{equation}
In a similar manner, letting $\phi=v$ in the variational formulation \eqref{UtoV} we obtain 
\begin{equation}
\lambda\int_D \nabla u \cdot \nabla \overline{v}-k^2nu\overline{v}  \, \text{d}x=\int_{\partial D}\eta |v|^2  \, \text{d}s +\int_D (1-\lambda) |\nabla v|^2-k^2(1-\lambda n)|v|^2  \, \text{d}x. \label{phi=v}\end{equation}
By the definition of $\mathbb{L}_k$ we have that
\begin{align*}
    (\mathbb{L}_ku, u)_{H^1(D)}&=\int_D\nabla v\cdot \nabla\overline{u}-k^2 v\overline{u} \, \text{d}x\\
    &=\int_D(1-\lambda)\nabla v \cdot \nabla \overline{u}-k^2(1-\lambda n)v \overline{u} \, \text{d}x +\lambda \int_D\nabla v\cdot \nabla\overline{u}- k^2 n v\overline{u} \, \text{d}x.
\end{align*}
Using \eqref{phi=u} and \eqref{phi=v} above we obtain that 
\begin{align*}
    (\mathbb{L}_ku, u)_{H^1(D)}=\lambda\int_D |\nabla u|^2&-k^2n|u|^2  \, \text{d}x+\int_{\partial D}\eta |v|^2  \, \text{d}s \\
   & +\int_D (1-\lambda) |\nabla v|^2-k^2(1-\lambda n)|v|^2  \, \text{d}x.
\end{align*}
Thus all the integrals on the right hand side are evaluated to be real numbers and that gives us that $\mathbb{L}_k$ is self-adjoint.\\
\newline (2) Now, we show that $\pm\mathbb{L}_0$ is coercive and we first analyze $-\mathbb{L}_0$. We assume that $\lambda\in (1,\infty)$ and that $\eta_{max}<0$. Letting $w=v+u$ in the definition of $\mathbb{L}_k$ gives

\begin{align*}
    (\mathbb{L}_ku,u)_{H^1(D)}=\int_D\nabla w\cdot\nabla \overline{u} -k^2 w \overline{u} \, \text{d}x-\int_D|\nabla u|^2 -k^2 |u|^2 \, \text{d}x.
\end{align*}
From the variational formulation \eqref{UtoV} with $\phi=w$ we have the following equality 
\begin{equation}
    \int_D\nabla w\cdot \nabla\overline{u}-k^2 w\overline{u} \, \text{d}x=\int_D (1-\lambda)|\nabla w|^2-k^2(1-\lambda n)|w|^2  \, \text{d}x+\int_{\partial D}\eta |w|^2  \, \text{d}s. \label{phi=w}\end{equation}
Now, using \eqref{phi=w} we get 
\begin{align}
    (\mathbb{L}_ku,u)_{H^1(D)}=\int_D(1-\lambda)|\nabla w|^2&-k^2(1-\lambda n)|w|^2 \, \text{d}x \nonumber \\
    &+\int_{\partial D}\eta |w|^2 \, \text{d}s-\int_D|\nabla u|^2 -k^2 |u|^2 \, \text{d}x.
    \end{align}
Therefore, letting $k=0$ we obtain 
\begin{equation}
    -(\mathbb{L}_0u,u)_{H^1(D)}=\int_D(\lambda-1)|\nabla w|^2  \, \text{d}x-\int_{\partial D}\eta |w|^2 \, \text{d}s+\int_D|\nabla u|^2  \, \text{d}x .\end{equation}
By appealing to the assumptions $\lambda\in (1,\infty)$ and $\eta_{max}<0$ we see that 
$$\int_D(\lambda-1)|\nabla w|^2 \, \text{d}x\geq 0\hspace{.5cm}\text{and}\hspace{.5cm} - \int_{\partial D}\eta |w|^2 \, \text{d}s\geq 0.$$
From this we can estimate  
\begin{align*}
    -(\mathbb{L}_0u,u)_{H^1(D)}&=\int_D(\lambda-1)|\nabla w|^2 \, \text{d}x-\int_{\partial D}\eta |w|^2\text{d}s+\int_D|\nabla u|^2  \, \text{d}x\\
    &\geq \int_D|\nabla u|^2  \, \text{d}x=\norm{\grad u}^2_{L^2 (D) }
\end{align*} 
proving the coercivity of the $-\mathbb{L}_k$ operator in $H^1_0(D)$.\\ 

 Next, assume that $\lambda\in (0,1)$ and $ \eta_{min}>0$ and for this case we consider the operator $\mathbb{L}_0$. From the definition of $\mathbb{L}_k$ we have that
$$(\mathbb{L}_k u,u)_{H^1(D)}=\int_D \nabla v \cdot \nabla  \overline{u}-k^2 v \overline{u} \, \text{d}x.$$
\noindent Letting $k=0$ in the variational formulation \eqref{UtoV} with $\phi =u$ gives 
\begin{equation} 
\lambda \int_D |\nabla u|^2 \, \text{d}x=\int_D(1-\lambda)\nabla v\cdot \nabla \overline{u} \, \text{d}x. \label{k=0}
\end{equation} 
In a similar way, using that $k=0$ in the variational formulation \eqref{UtoV} with $\phi =v$ gives us
\begin{equation}
\lambda \int_D \nabla u \cdot \nabla \overline{v} \, \text{d}x=\int_D(1-\lambda)|\nabla v|^2  \, \text{d}x+\int_{\partial D} \eta |v|^2  \, \text{d}s. \label{v,k=0} \end{equation}
Now, consider $\mathbb{L}_0$ and using  \eqref{k=0} and \eqref{v,k=0} provide independently, and so we get
\begin{align*}
    (\mathbb{L}_0u,u)_{H^1(D)}&=\int_D \nabla v\cdot \nabla \overline{ u} \, \text{d}x\\
&=\lambda\int_D |\nabla u|^2 \text{d}x +\int_{\partial D}\eta |v|^2 \text{d}s +\int_D(1-\lambda)|\nabla v|^2 \, \text{d}x\\
&\geq\lambda\int_D |\nabla u|^2 \text{d}x +\int_{\partial D}\eta_{min} |v|^2 \text{d}s +\int_D(1-\lambda)|\nabla v|^2 \, \text{d}x\\
&\geq \lambda \norm{\nabla u}^2_{L^2(D)}
\end{align*}
\sout{and} where we have used the assumptions of $\lambda\in (0,1)$ and $\eta_{min}>0$.  Therefore, proving the coercivity in this case.  \\
\newline 
 $(3)$ Now, we turn our attention to proving the compactness of $\mathbb{L}_k-\mathbb{L}_0$. To do so, we assume that we have a {weakly convergent} sequence $u^j\rightharpoonup 0$ in $ H_0^1(D)$. By the wellposedness, there exists $v^j_k\rightharpoonup 0$ and $v_0^j\rightharpoonup 0$ in $H^1(D),$ where these correspond to the solutions of our variational formulation \eqref{UtoV}. The definition of  $\mathbb{L}_k$ gives us that we can define $(\mathbb{L}_k-\mathbb{L}_0)u^j$ in terms of $v_k^j$ and $v_0^j$. Using the variational formulation \eqref{UtoV}, we have that   
 $$\int_{\partial D}\eta v_k^j\overline{\phi} \, \text{d}s+\int_D(1-\lambda)\nabla v_k^j\cdot \nabla \overline{\phi}-k^2(1-\lambda n)v_k^j\overline{\phi} \, \text{d}x=\lambda\int_D\nabla u^j\cdot \nabla \overline{\phi}-k^2 n u^j\overline{\phi} \, \text{d}x$$
 and 
 $$\int_{\partial D}\eta v_0^j\overline{\phi} \, \text{d}s+\int_D(1-\lambda)\nabla v_0^j\cdot \nabla \overline{\phi} \, \text{d}x=\lambda\int_D\nabla u^j\cdot \nabla \overline{\phi} \, \text{d}x$$
for all $\phi \in H^1(D)$. Subtracting both equations gives us that
$$\int_{\partial D}\eta (v_k^j-v_0^j)\overline{\phi} \, \text{d}s+\int_D(1-\lambda)\nabla  (v_k^j-v_0^j)\cdot \nabla \overline{\phi} \, \text{d}x=\int_Dk^2(1-\lambda n)v_k^j\overline{\phi}-\lambda n k^2u^j\overline{\phi} \, \text{d}x.$$
We now let $\phi= v_k^j-v_0^j$ and  we have the following
$$\int_{\partial D}\eta | v_k^j-v_0^j|^2 \, \text{d}s+\int_D(1-\lambda)|\nabla (v_k^j-v_0^j) |^2 \, \text{d}x=\int_Dk^2(1-\lambda n)v_k^j\overline{(v_k^j-v_0^j)}-\lambda nk^2u^j\overline{(v_k^j-v_0^j)} \, \text{d}x.$$
Notice, that on the left hand side we use the fact that 
$$\| \cdot \|_{H^1(D)}^2 \quad \text{is equivalent to } \quad \int_D |\nabla\cdot|^2 \, \text{d}x+\int_{\partial D} |\cdot|^2 \, \text{d} s.$$
By the compact embedding of $H^1(D)$ into $L^2(D)$, we have that $v_k^j$ and $u^j$ converge strongly to zero in the $L^2(D)$--norm. 
Thus, we have that the right hand side behaves as
$$ \|{v_k^j-v_0^j}\|_{H^1(D)}\leq  C\Big(\|{v^j_k}\|_{L^2(D)}+\|{u^j}\|_{L^2(D)} \Big) \longrightarrow 0$$
as $j \to \infty$. Notice that the $C>0$ above is independent of the parameter $j$'s but does depend on the material parameters. Note that we have used the assumptions on $\lambda$ and $\eta$. Now, observe the following 
\begin{align*}
    \Big((\mathbb{L}_k-\mathbb{L}_0)u^j,\psi\Big)_{H^1(D)}&=\int_D \nabla v_k^j\cdot \nabla \overline{\psi}-k^2v_k^j\overline{\psi}  \, \text{d}x- \int_D\nabla v_0^j\cdot \nabla \overline{\psi} \, \text{d}x\\
    &=\int_D\nabla(v_k^j-v_0^j) \cdot \nabla \overline{\psi}-k^2 v_k^j\overline{\psi} \, \text{d}x
\end{align*}
and using the Cauchy-Schwartz inequality we have
$$\|{(\mathbb{L}_k-\mathbb{L}_0)u^j}\|_{H^1(D)}\leq C\Big(\|{v^j_k}\|_{L^2(D)}+\|{v_k^j-v_0^j}\|_{H^1(D)} \Big) \longrightarrow 0 . $$ 
Therefore, we have shown that $(\mathbb{L}_k-\mathbb{L}_0)u^j$ tends to zero as $j$ tends to infinity, proving the claim.
\end{proof}

We have shown three important properties that will help us establish when our operator $\mathbb{L}_k$ has a trivial null-space. In addition, we want to make the observation that $\mathbb{L}_k$ depends continuously on $k$ by a similar argument as in Theorem \ref{Thm4.2}. We continue by showing that the operator $\pm \mathbb{L}_k$ is positive for a range of values which will give a lower bound on the transmission eigenvalues. 
\begin{theorem}\label{T4.3}
Let $\mu_1(D$) be the first Dirichlet eigenvalue of $-\Delta$ and let $k^2$ be a real transmission eigenvalue.  Then, we have the following:
\begin{enumerate}
\item If $\lambda \in (1,\infty)$, $ \eta_{max} <0$, and $\lambda n_{max}-1<0$, then $-\mathbb{L}_k$ is a positive operator for $k^2<\mu_1(D)$.
\item If $\lambda\in (0,1)$, $\eta_{min}> 0$, and $\lambda n_{min}-1>0$, then $\mathbb{L}_k$ is a positive operator for $k^2< \frac{\mu_1(D)}{n_{max}}$.
\end{enumerate}
\end{theorem}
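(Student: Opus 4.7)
The plan is to rely on two identities for $(\mathbb{L}_k u, u)_{H^1(D)}$ that were already established in the proof of Theorem \ref{Thm4.2}, and then to bound the remaining troublesome term by the Rayleigh quotient characterization of $\mu_1(D)$ on $H^1_0(D)$, namely $\int_D |\nabla u|^2\,\text{d}x \geq \mu_1(D) \int_D |u|^2\,\text{d}x$ for all $u \in H^1_0(D)$.

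For part (1), I would start from the identity derived in the coercivity argument for $-\mathbb{L}_0$ in Theorem \ref{Thm4.2}, but keeping the $k^2$ terms. Writing $w = u + v$ and taking $\phi = w$ in the variational formulation \eqref{UtoV}, one obtains
\begin{align*}
-(\mathbb{L}_k u,u)_{H^1(D)} &= \int_D (\lambda-1)|\nabla w|^2 + k^2(1-\lambda n)|w|^2 \,\text{d}x \\
&\quad - \int_{\partial D}\eta |w|^2\,\text{d}s + \int_D |\nabla u|^2 - k^2|u|^2\,\text{d}x.
\end{align*}
Under the assumptions $\lambda > 1$, $\eta_{\max}<0$, and $\lambda n_{\max} - 1 < 0$, each of the first three integrals is non-negative. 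For $u \in H^1_0(D)$ with $u \neq 0$, the Poincaré inequality gives $\int_D |\nabla u|^2\,\text{d}x \geq \mu_1(D)\int_D |u|^2\,\text{d}x > k^2 \int_D |u|^2\,\text{d}x$ whenever $k^2 < \mu_1(D)$. Hence $-(\mathbb{L}_k u,u)_{H^1(D)} > 0$.

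For part (2), I would instead use the identity derived in the self-adjointness proof of Theorem \ref{Thm4.2}, combining \eqref{phi=u} and \eqref{phi=v} to write
\begin{align*}
(\mathbb{L}_k u,u)_{H^1(D)} &= \lambda \int_D |\nabla u|^2 - k^2 n|u|^2\,\text{d}x + \int_{\partial D}\eta|v|^2\,\text{d}s \\
&\quad + \int_D (1-\lambda)|\nabla v|^2 - k^2(1-\lambda n)|v|^2\,\text{d}x.
\end{align*}
Under the assumptions $\lambda \in (0,1)$, $\eta_{\min}>0$, and $\lambda n_{\min}-1>0$, the boundary term and both $v$-integrals (observing that $-k^2(1-\lambda n) = k^2(\lambda n - 1) \geq 0$) are non-negative. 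For the $u$-integral, I would bound $\int_D n|u|^2\,\text{d}x \leq n_{\max}\int_D |u|^2\,\text{d}x$ and then apply Poincaré to get
$$\lambda \int_D |\nabla u|^2 - k^2 n|u|^2\,\text{d}x \geq \lambda \left(\int_D |\nabla u|^2 \,\text{d}x - k^2 n_{\max} \int_D |u|^2\,\text{d}x\right) \geq \lambda\left(1 - \tfrac{k^2 n_{\max}}{\mu_1(D)}\right)\int_D |\nabla u|^2\,\text{d}x,$$
which is strictly positive for $u \neq 0$ provided $k^2 < \mu_1(D)/n_{\max}$.

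There is no real obstacle here; the only subtlety is bookkeeping, namely recognizing that the identity useful for case (1) is the one obtained with the substitution $w = u+v$, while the identity useful for case (2) is the one obtained directly from the test functions $\phi = u$ and $\phi = v$. Choosing the wrong identity forces the stronger bound $k^2 < \mu_1(D)/n_{\max}$ in case (1), or fails to isolate a positive $|\nabla u|^2$ term in case (2).
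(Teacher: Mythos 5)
Your proposal is correct and follows essentially the same route as the paper: part (1) uses the identity for $-(\mathbb{L}_k u,u)_{H^1(D)}$ obtained from the substitution $w=u+v$ with $\phi=w$ in \eqref{UtoV}, part (2) uses the identity from $\phi=u$ and $\phi=v$, and both conclude by discarding the non-negative $w$- or $v$-terms and applying the Poincar\'{e} inequality to the remaining $|\nabla u|^2 - k^2 (\cdot) |u|^2$ integral. The sign checks and the final bounds $\left(1-\tfrac{k^2}{\mu_1(D)}\right)\|\nabla u\|^2_{L^2(D)}$ and $\lambda\left(1-\tfrac{k^2 n_{max}}{\mu_1(D)}\right)\|\nabla u\|^2_{L^2(D)}$ match the paper exactly.
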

\begin{proof} $(1)$ We first assume that $\lambda \in (1,\infty)$, $ \eta_{max} <0$, and $\lambda n_{max}-1<0$. Using the definition of $\mathbb{L}_k$ and $w=v+u$ we have that 
\begin{align*}
    -(\mathbb{L}_ku,u)_{H^1(D)}&=-\int_D(1-\lambda)|\nabla w|^2-k^2(1-\lambda n)|w^2| \, \text{d}x-\int_{\partial D}\eta|w^2| \, \text{d}s\\
      &\hspace{5cm}+\int_D |\nabla u|^2-k^2|u|^2 \, \text{d}x\\
    & \geq \int_D(\lambda-1)|\nabla w|^2-k^2(\lambda n_{max}-1)|w^2| \, \text{d}x-\eta_{max}\int_{\partial D}|w^2| \, \text{d}s\\
    &\hspace{5cm}+\int_D |\nabla u|^2-k^2|u|^2 \, \text{d}x\\
    &\geq  \int_D |\nabla u|^2-k^2|u|^2 \, \text{d}x.
\end{align*}
Observe, that $u\in H^1_0(D)$  implies that we have the estimate 
$$\norm{u}_{L^2(D)}^2\leq \frac{1}{\mu_1(D)}\norm{\nabla u}_{L^2(D)}^2 \quad \text{(i.e. Poincar\'{e} inequality)}$$
 where $\mu_1(D)$ is the 1st Dirichlet eigenvalue of $-\Delta$. This gives that
\begin{align*}
    -(\mathbb{L}_ku,u)_{H^1(D)} &\geq \left( 1-\frac{k^2}{\mu_1(D)}\right)\norm{\grad u}^2_{L^2(D)}.
\end{align*}
Now if $\left( 1-\frac{k^2}{\mu_1(D)} \right)> 0$, we have that $-(\mathbb{L}_k u,u)_{H^1(D)}>0$ for all $u \neq 0$ which gives us that all real transmission eigenvalues must satisfy that $k^2\geq \mu_1(D).$\\
\newline 
$(2)$ On the other hand, assume that  $\lambda\in (0,1)$, $\eta_{min}> 0$, and $\lambda n_{min}-1>0$. Using our variational formulation \eqref{UtoV} and let $\phi=u$ to obtain  
\begin{align*}
   (\mathbb{L}_ku,u)_{H^1(D)}&=\lambda\int_D|\nabla u|^2-k^2n|u|^2 \, \text{d}x + \int_D(1-\lambda)|\nabla v|^2-k^2(1-\lambda n)|v|^2  \, \text{d}x \\
    &\hspace{5cm}+\int_{\partial D} \eta |v|^2  \, \text{d}s\\
   & \geq \lambda\int_D|\nabla u|^2-k^2n_{max}|u|^2 \, \text{d}x+ \int_D(1-\lambda)|\nabla v|^2+k^2(\lambda n_{min}-1)|v|^2  \, \text{d}x\\
   &\hspace{5cm}+\int_{\partial D} \eta_{min} |v|^2  \, \text{d}s
\end{align*}
\begin{equation}\label{26}
    \hspace{1.4cm}\geq  \lambda\int_D|\nabla u|^2-k^2n_{max}|u|^2 \, \text{d}x.
\end{equation}
By again, appealing to the Poincar\'{e} inequality we have that 
\begin{align*}
    (\mathbb{L}_ku,u)_{H^1(D)} \geq \lambda \left( 1- n_{max} \frac{k^2}{\mu_1(D)} \right) \norm{\grad u}^2_{L^2(D)}.
\end{align*}
Now if $\left( 1-n_{max}\frac{k^2}{\mu_1(D)} \right)> 0$, we {conclude} that $(\mathbb{L}_k u,u)_{H^1(D)}>0$ for all $u \neq 0$ which implies that all real transmission eigenvalues must satisfy that $k^2\geq \frac{\mu_1(D)}{n_{max}}.$
\end{proof}
Theorem \ref{T4.3} shows that the operator $\pm\mathbb{L}_k$ is positive for a range of $k$ values. Next, we will show one last result to help us establish when the null-space of $\mathbb{L}_k$ is nontrivial. The property that we want to show is that the operator $\pm\mathbb{L}_k$ is non-positive for some $k$ on a subset of $H_0^1(D)$.
\begin{theorem}\label{T4.4}
There exists $\tau>0$ such that $-\mathbb{L}_\tau$, or $\mathbb{L}_\tau$ for  $\lambda \in (1,\infty)$, $ \eta_{max} <0$, and $\lambda n_{max}-1<0$, or $\lambda\in (0,1)$, $\eta_{min}> 0$, and $\lambda n_{min}-1>0$ respectively, is non-positive on $N$--dimensional subspaces of $H_0^1(D)$ for any $N \in \N$.
\end{theorem}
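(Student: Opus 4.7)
The plan is to adapt the construction of Cakoni--Kirsch \cite{C and K}: exhibit an $N$-dimensional subspace of $H_0^1(D)$ on which the quadratic form has the desired sign, by supporting the test functions in $N$ disjoint small balls inside $D$ and building each one from a model transmission eigenfunction of a constant-coefficient ball problem. I present the case $\lambda\in(1,\infty)$, $\eta_{\max}<0$, $\lambda n_{\max}-1<0$; the other case is analogous with $\mathbb{L}_\tau$ replacing $-\mathbb{L}_\tau$. Starting from the identity proved inside Theorem \ref{Thm4.2}(2), for $w=u+v_u$,
\[
-(\mathbb{L}_\tau u,u)_{H^1(D)}=\int_D(\lambda-1)|\nabla w|^2+\tau^2(1-\lambda n)|w|^2\,\text{d}x-\int_{\partial D}\eta|w|^2\,\text{d}s+\int_D|\nabla u|^2-\tau^2|u|^2\,\text{d}x,
\]
and under the standing assumptions the first three integrals are non-negative. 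So the idea is to pick $u$ concentrated at high frequency $\tau$ so that the last term is strongly negative while the $w$-contributions remain bounded.

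The building block is the constant-coefficient model transmission problem on a ball $B_\rho\subset D$: find non-trivial $(\tilde w,\tilde v)\in H^1(B_\rho)\times H^1(B_\rho)$ with $\Delta\tilde w+\tau^2 n_{\max}\tilde w=0$ and $\Delta\tilde v+\tau^2\tilde v=0$ in $B_\rho$, subject to $\tilde w=\tilde v$ and $\lambda\partial_\nu\tilde w=\partial_\nu\tilde v$ on $\partial B_\rho$. Separation of variables (Bessel functions and spherical harmonics) shows this has a discrete sequence of real eigenvalues $\tau_\ell(\rho)\nearrow\infty$, obeying the scaling $\tau_\ell(\rho)=\tau_\ell(1)/\rho$. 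For such an eigenpair the matching conditions give $\tilde u:=\tilde w-\tilde v\in H_0^1(B_\rho)$, and extension by zero produces $\tilde u\in H_0^1(D)$. The core ball lemma is that at $\tau=\tau_\ell(\rho)$ one has $(-\mathbb{L}_\tau\tilde u,\tilde u)_{H^1(D)}\le 0$; I would prove this by using the Lax--Milgram characterization of $v_{\tilde u}$ as the unique solution of \eqref{UtoV} and taking $\tilde v$ (suitably extended from $B_\rho$ to $D$) as a comparison trial function, so that the $w$-contributions above are dominated by the strongly negative term $\int_{B_\rho}|\nabla\tilde u|^2-\tau^2|\tilde u|^2\,\text{d}x$.

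To assemble the $N$-dimensional subspace, given $N$ I pick $\rho>0$ small enough that $N$ pairwise disjoint balls $B_\rho(x_j)\subset D$ fit inside $D$; set $\tau:=\tau_1(\rho)=\tau_1(1)/\rho$, and for each $j$ let $\tilde u_j$ be the translate to $B_\rho(x_j)$ of the ball lemma's test function. The diagonal bound $(-\mathbb{L}_\tau\tilde u_j,\tilde u_j)_{H^1(D)}\le -c<0$ obtained from the ball lemma, together with elliptic interior decay estimates that make the off-diagonal cross terms $(-\mathbb{L}_\tau\tilde u_i,\tilde u_j)_{H^1(D)}$ for $i\neq j$ an arbitrarily small fraction of the diagonal once $\rho$ is small enough, forces the quadratic form to remain non-positive on $\mathrm{span}\{\tilde u_1,\ldots,\tilde u_N\}$, as required. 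The main obstacle is the ball lemma itself, because the global $v_{\tilde u}$ satisfies a Robin-type condition on $\partial D$ and does not coincide with the local $\tilde v$; bridging this mismatch requires the variational/saddle-point characterization of $v_{\tilde u}$ together with a carefully chosen comparison function that agrees with $\tilde v$ inside $B_\rho$ and decays properly outside, mirroring the corresponding construction in \cite{C and K}.
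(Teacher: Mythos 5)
Your overall architecture matches the paper's: disjoint small balls, a constant-coefficient model transmission problem on each ball with index $n_{\max}$ and boundary condition $\lambda\partial_\nu \tilde w=\partial_\nu\tilde v$ (no $\eta$), extension of $\tilde u=\tilde w-\tilde v$ by zero, and a comparison between the global $v_{\tilde u}$ and the local model pair. However, there are two genuine gaps. First, the core ball lemma --- the only nontrivial step --- is left open, and the mechanism you sketch for it (extending $\tilde v$ from $B_\rho$ to $D$ as a trial function) is not the one that closes it and would force you to control the extension's energy outside $B_\rho$, which does not obviously cancel. The paper's argument needs no extension of $\tilde v$ at all: since $u_2$ is the zero-extension of $\tilde u$ and $v_2=v_{u_2}$ solves \eqref{UtoV} in $D$, one gets the single variational identity \eqref{27}, valid for \emph{all} $\phi\in H^1(D)$, equating the weighted form of $w_2$ over $D$ with the weighted form of $\tilde w$ over $B_\rho$; testing with $\phi=w_2$ and applying Cauchy--Schwarz in the weighted inner product on $H^1(B_\rho)$ (an inner product precisely because $\lambda-1>0$, $1-\lambda n_{\max}>0$, so this is where the sign hypotheses enter) yields
$\int_D(\lambda-1)|\nabla w_2|^2-\tau^2(\lambda n-1)|w_2|^2\,\mathrm{d}x-\int_{\partial D}\eta|w_2|^2\,\mathrm{d}s\le\int_{B_\rho}(\lambda-1)|\nabla \tilde w|^2-\tau^2(\lambda n_{\max}-1)|\tilde w|^2\,\mathrm{d}x$, and adding $\int_{B_\rho}|\nabla\tilde u|^2-\tau^2|\tilde u|^2\,\mathrm{d}x$ gives exactly $0$ by the ball eigenvalue relation --- not a ``strongly negative'' term dominating bounded ones.

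Second, your assembly step is both unjustified and unnecessary. The ball lemma only yields $(-\mathbb{L}_\tau \tilde u_j,\tilde u_j)_{H^1(D)}\le 0$, not $\le -c<0$; the chain of inequalities closes with equality to zero on the right, and strict negativity would require showing the Cauchy--Schwarz step is strict, which you do not do. Consequently the plan ``strictly negative diagonal plus decaying off-diagonal cross terms'' cannot be executed as stated: with a diagonal that may be arbitrarily close to $0$, no smallness of the cross terms (which you also do not prove) rescues non-positivity of the form on the span. The paper avoids cross terms entirely: for $u=\sum_j c_j u_j$, linearity of $u\mapsto v_u$ gives $v_u=\sum_j c_j v_{u_j}$, the term $\int_D|\nabla u|^2-\tau^2|u|^2\,\mathrm{d}x$ splits exactly over the disjoint supports, and the identity \eqref{27} summed over $j$ is again an equality between the global weighted form of $w$ and an inner product over the disjoint union $\cup_j B_\rho(x_j)$, so the same Cauchy--Schwarz argument applies verbatim to the combination and gives $-(\mathbb{L}_\tau u,u)_{H^1(D)}\le 0$ directly. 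You should replace the decay-estimate strategy with this linearity argument, and supply the missing Cauchy--Schwarz step for the ball lemma.
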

\begin{proof} We begin with the case when $\lambda \in (1,\infty)$, $ \eta_{max} <0$, and $\lambda n_{max}-1<0$. We consider the ball $B_\epsilon$ of radius $\epsilon>0$ such that   $B_\epsilon\subset D$. Using separation of variables one can see that there exist transmission eigenvalues for the system (See Section \ref{numerics}) 
\begin{align*}
\Delta w_1+\tau^2 n_{max} w_1 = 0 \quad \text{ and } \quad \Delta v_1+\tau^2v_1 = 0 \quad  \text{in} \quad  B_\epsilon\\ 
w_1=v_1 \quad \text{ and } \quad  \lambda \partial_{\nu} w_1=\partial_{\nu}v_1  \quad \text{on} \quad  \partial B_\epsilon .
\end{align*}
Letting $u_1$ be the difference of the eigenfunctions with corresponding eigenvalue $\tau$ gives us the following using \eqref{26}
$$ \int_{B_\epsilon} |\nabla u_1|^2- \tau^2 |u_1|^2 \, \text{d}x+\int_{B_\epsilon} (\lambda-1) |\nabla w_1|^2 -  \tau^2(\lambda n_{max}-1)|w_1|^2 \, \text{d}x=0 .$$
Therefore, since  $u_1\in H^1_0(B_\epsilon)$ we can take the extension by zero of $u_1$ to the whole domain be denoted by $u_2 \in H^1_0(D)$. Now, since $\lambda \in (1,\infty)$, $ \eta_{max} <0$, and $\lambda n_{max}-1<0$, we can construct the nontrivial $v_2\in H^1(D)$ that solves the variational formulation \eqref{UtoV} with coefficients $\lambda$, $n$, and $\eta$ in the domain $D$ and we also let $w_2=v_2+u_2$. Using the relationship between $v_2$ and $u_2$ and $w_2=v_2+u_2$ just as in the proof of Theorem \ref{Thm4.2} we have that
\begin{align}
    \int_D(\lambda-1)\nabla w_2 \cdot \nabla \overline{\phi}-\tau^2(\lambda n-1)w_2\overline{\phi} \, \text{d}x-\int_{\partial D}\eta w_2\overline{\phi} \, \text{d}s &=-\int_{D}\nabla u_2 \cdot \nabla\overline{\phi}-\tau^2u_2\overline{\phi} \, \text{d}x \nonumber \\ 
    &\hspace{-1.5in} =-\int_{B_\epsilon}\nabla u_1\cdot \nabla\overline{\phi}-\tau^2u_1\overline{\phi} \, \text{d}x \nonumber \\
     &\hspace{-1.5in}=\int_{B_\epsilon}(\lambda-1)\nabla w_1 \cdot \nabla \overline{\phi}-\tau^2(\lambda n_{max}-1)w_1\overline{\phi} \, \text{d}x. \label{27}
\end{align}
Letting $\phi=w_2$ in \eqref{27} and using the Cauchy-Schwartz inequality because we have an inner product on the right hand side over the space $H^1(B_\epsilon)$  gives us
\begin{align*}
\int_D(\lambda-1)|\nabla w_2|^2-\tau^2(\lambda n-1)|w_2|^2 \, \text{d}x&-\int_{\partial D}\eta| w_2|^2 \, \text{d}s\\
&=\int_{B_\epsilon}(\lambda-1)\nabla w_1 \cdot \nabla \overline{w_2}-\tau^2(\lambda n_{max}-1)w_1\overline{w_2} \, \text{d}x
\end{align*}
$$\leq \Big[\int_{B_\epsilon}(\lambda-1)|\nabla w_1|^2 -\tau^2(\lambda n_{max}-1)|w_1|^2 \, \text{d}x\Big]^{\frac{1}{2}}\Big[\int_{B_\epsilon}(\lambda-1)|\nabla w_2|^2 -\tau^2(\lambda n_{max}-1)|w_2|^2 \, \text{d}x\Big]^{\frac{1}{2}}.$$
As a consequence of the above inequality we have that 
\begin{align*}
\int_D(\lambda-1) | \nabla w_2 |^2-\tau^2(\lambda n-1) |w_2|^2 \, \text{d}x&-\int_{\partial D}\eta | w_2 |^2  \, \text{d}s \\
&\leq \int_{B_\epsilon}(\lambda-1)|\nabla w_1|^2 -\tau^2(\lambda n_{max}-1)|w_1|^2 \, \text{d}x.
\end{align*}
Now, we use the definition of $-\mathbb{L}_\tau$ in \eqref{26} with the functions $u_2$ and $w_2$ to conclude that
\begin{align*}
    -(\mathbb{L}_{\tau}u_2,u_2)_{H^1(D)}&= - \int_D\nabla v_2\cdot \nabla \overline{u_2}-\tau^2 v_2 \overline{u_2}  \, \text{d}x\\ 
    &=\int_{D} |\nabla u_2|^2- \tau^2 |u_2|^2 \, \text{d}x \\
     &\hspace{0.5in} +\int_{D} (\lambda-1) |\nabla w_2|^2 -  \tau^2(\lambda n-1)|w_2|^2 \, \text{d}x-\int_{\partial D} \eta |w_2|^2 \, \text{d}s 
\end{align*}
by the calculations in Theorem \ref{T4.3}. Now, estimating using the above inequality to obtain      
\begin{align*}
    (\mathbb{L}_{\tau}u_2,u_2)_{H^1(D)}&=\int_{B_\epsilon} |\nabla u_1|^2- \tau^2 |u_1|^2 \, \text{d}x \\
    &\hspace{0.5in}+\int_{D} (\lambda-1) |\nabla w_2|^2 -  \tau^2(\lambda n-1)|w_2|^2 \, \text{d}x-\int_{\partial D} \eta |w_2|^2 \, \text{d}s\\
    &\leq \int_{B_\epsilon} |\nabla u_1|^2- \tau^2 |u_1|^2 \, \text{d}x +\int_{B_\epsilon}(\lambda-1)|\nabla w_1|^2 -\tau^2(\lambda n_{max}-1)|w_1|^2 \, \text{d}x\\
    &=0.
\end{align*}
Thus, the operator is non-positive on this one dimensional subspace. 

We now argue that, for some $\tau >0$,  we can construct an $N$--dimensional subspace of $H^1_0(D)$ where the operator $-\mathbb{L}_\tau$ is non-positive for any $N \in \N$. To this end, let $N$ be fixed and define $B_j=\{ B(x_j,\epsilon):x_j\in D, \epsilon>0 \} \subset D$ for $j=1, \cdots,N$ where we assume $B_j\cap B_i=\emptyset$ for all $i \neq j$. We make the assumption that $\lambda \in (1,\infty)$, $ \eta_{max} <0$, and $\lambda n_{max}-1<0$ and denoting $\tau$ as the smallest transmission eigenvalue for 
$$\Delta w_j+\tau^2 n_{max} w_j = 0 \quad \text{ and } \quad \Delta v_j+\tau^2v_j = 0 \quad  \text{in} \quad  B_j$$
$$w_j=v_j \quad \text{ and } \quad  \lambda \partial_{\nu} w_j=\partial_{\nu}v_j \quad \text{on} \quad  \partial B_j .$$ 
From this, we let  $u_j\in H^1_0(D)$ be the difference of the eigenfunctions $w_j$ and $v_j$ extended to $D$ by zero. Therefore, we have that for $j=1,\dots,N$ the supports of $u_j$ and $u_i$ are disjoint, i.e. $u_j$ and $u_i$ are orthogonal to each other for $j\neq i$. Thus, the span$\{u_1,u_2,\dots,u_{N}\}$ is a $N$--dimensional subspace of $H^1_0(D)$. Now, because the support of the basis functions are disjoint and using the same arguments as above, we can show that $- \mathbb{L}_{\tau}$ is non-positive for any $u$ in the $N$--dimensional subspace of $H^1_0(D)$ spanned by the $u_j$'s. This proves that claim since $N$ is arbitrary. The same result can be proven for $\mathbb{L}_k$ exactly in a similar way for the case when $\lambda\in (0,1)$, $\eta_{min}> 0$, and $\lambda n_{min}-1>0$.
\end{proof}

We have shown five important properties that will compile to imply the existence of transmission eigenvalues. This requires appealing to the following theorem first introduced in \cite{C and K} to study anisotropic transmission eigenvalue problems.

\begin{theorem}\label{eiglemma}
Assume that we have  $\mathbb{L}_k:H_0^1(D)\longrightarrow H_0^1(D)$ that satisfies  
\begin{enumerate}
\item $\mathbb{L}_k$ is self-adjoint and it depends on $k>0$ continuously
\item $\pm\mathbb{L}_0$ is coercive
\item $\mathbb{L}_k-\mathbb{L}_0$ is compact
\item There exists $\alpha>0$ such that $\mathbb{L}_{\alpha}$ is a positive operator
\item There exists $\beta>0$ such that  $\mathbb{L}_{\beta}$ is non-positive on an $m$ dimensional subspace
\end{enumerate}
Then there exists $m$ values $ k_j \in (\alpha , \beta)$ such that  $\mathbb{L}_{k_j}$ has a non-trivial subspace.
\end{theorem}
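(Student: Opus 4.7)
The plan is to recast the existence question as one of tracking zeros of eigenvalue curves $k \mapsto \mu_j(k)$. Under hypothesis 2 I may assume without loss of generality (replacing $\mathbb{L}_k$ by $-\mathbb{L}_k$ if necessary) that $\mathbb{L}_0$ itself is coercive and positive, so that $\mathbb{L}_0 \geq c_0 I$ on $H_0^1(D)$ for some $c_0 > 0$. The decomposition $\mathbb{L}_k = \mathbb{L}_0 + (\mathbb{L}_k - \mathbb{L}_0)$ then exhibits $\mathbb{L}_k$ as a coercive positive operator plus a compact self-adjoint perturbation (hypothesis 3). By the stability of the essential spectrum under compact perturbations, $\sigma_{\text{ess}}(\mathbb{L}_k) = \sigma_{\text{ess}}(\mathbb{L}_0) \subset [c_0, \infty)$ for every $k > 0$; in particular, any spectral value of $\mathbb{L}_k$ lying in $(-\infty, c_0/2]$ is an isolated eigenvalue of finite multiplicity, and $\mathbb{L}_k$ has a non-trivial null-space precisely when $0$ falls in this discrete part of the spectrum.

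To locate such parameters, I would introduce the Courant--Fischer levels
\begin{equation*}
\mu_j(k) := \inf_{\substack{V \subset H_0^1(D) \\ \dim V = j}} \; \sup_{u \in V \setminus \{0\}} \frac{\langle \mathbb{L}_k u, u \rangle_{H^1(D)}}{\| u \|^2_{H^1(D)}}, \qquad j \in \N,
\end{equation*}
which coincide with the $j$-th eigenvalue of $\mathbb{L}_k$ counted with multiplicity whenever they fall below $c_0$, and otherwise equal the bottom of the essential spectrum. The continuity of $k \mapsto \mathbb{L}_k$ from hypothesis 1 passes through the min--max to yield continuity of each $\mu_j$ on $(0, \infty)$. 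Hypothesis 4 gives $\langle \mathbb{L}_\alpha u, u \rangle > 0$ for all non-zero $u$; combined with the spectral structure above, this forces $\mu_j(\alpha) > 0$ for every $j$. Hypothesis 5 supplies an $m$-dimensional subspace $W \subset H_0^1(D)$ on which $\langle \mathbb{L}_\beta u, u\rangle \leq 0$; testing the infimum defining $\mu_m$ with $V = W$ gives $\mu_m(\beta) \leq 0$, and by the ordering $\mu_1(k) \leq \cdots \leq \mu_m(k)$ the same conclusion holds for $\mu_1(\beta), \dots, \mu_{m-1}(\beta)$.

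The intermediate value theorem then produces, for each $j = 1, \dots, m$, a parameter $k_j \in (\alpha, \beta)$ with $\mu_j(k_j) = 0$; counted with multiplicity these yield the required $m$ transmission eigenvalues. The main obstacle I anticipate is the final identification, namely that a zero of $\mu_j$ corresponds to a genuine eigenvalue of $\mathbb{L}_{k_j}$ rather than a point of continuous or essential spectrum. This is precisely where hypotheses 2 and 3 earn their keep: together they force $\sigma_{\text{ess}}(\mathbb{L}_{k_j})$ to remain bounded away from $0$ by $c_0$, so that $\mu_j(k_j) = 0$ lies strictly below the essential spectrum and is therefore necessarily an isolated eigenvalue of finite multiplicity with non-trivial null-space. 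Without this spectral separation, the zeros of the min--max curves could be absorbed into the essential spectrum and the argument would collapse.
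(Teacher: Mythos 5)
Your proposal is correct, but note that the paper itself gives no proof of this statement: it simply cites Theorem 2.6 of the Cakoni--Kirsch reference \cite{C and K}. Your argument is essentially the standard one behind that cited result, with one genuine technical variation. The classical route normalizes by writing $\mathbb{L}_0^{-1/2}\mathbb{L}_k\mathbb{L}_0^{-1/2}=I+K_k$ with $K_k$ compact and self-adjoint, so that a non-trivial kernel of $\mathbb{L}_{k}$ corresponds to $-1$ being an eigenvalue of $K_k$, and then tracks the ordered negative eigenvalues of $K_k$ across the level $-1$ by continuity. You instead keep $\mathbb{L}_k$ itself, invoke Weyl's theorem to pin $\sigma_{\mathrm{ess}}(\mathbb{L}_k)=\sigma_{\mathrm{ess}}(\mathbb{L}_0)\subset[c_0,\infty)$, and track the Courant--Fischer levels $\mu_j(k)$ across the level $0$; what this buys is the avoidance of operator square roots, at the price of having to argue (as you correctly do) that a zero of $\mu_j$ sits strictly below the essential spectrum and is therefore a genuine isolated eigenvalue. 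Two small points you handled implicitly but should make explicit: strict positivity of $\mathbb{L}_\alpha$ alone only gives $\mu_j(\alpha)\geq 0$, and the strict inequality $\mu_j(\alpha)>0$ requires exactly the discreteness of the spectrum below $c_0$ (otherwise the infimum could be $0$ without being attained); and the intermediate value theorem a priori places $k_j\in(\alpha,\beta]$ rather than the open interval, since $\mu_j(\beta)\leq 0$ is not strict --- in that degenerate case $k_j=\beta$ is itself an admissible eigenvalue, which is how the cited statement is usually phrased. Neither issue affects the substance of the argument.
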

\begin{proof}
The proof of this result can be found in   \cite{C and K}  Theorem 2.6.
\end{proof}

By the above result as well as the analysis presented in this section we have  the main result of the paper. This gives that there exists infinitely many transmission eigenvalues. 
 
\begin{theorem}\label{exist-thm}
Assume either $\lambda \in (1,\infty)$, $ \eta_{max} <0$, and $\lambda n_{max}-1<0$, or $\lambda\in (0,1)$, $\eta_{min}> 0$ and $\lambda n_{min}-1>0$ respectively, then
there exists infinitely many real transmission eigenvalues $k>0$.
\end{theorem}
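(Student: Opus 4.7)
The plan is to apply Theorem \ref{eiglemma} to the operator $\pm \mathbb{L}_k$ (with the sign chosen according to which of the two parameter regimes we are in) on the Hilbert space $H_0^1(D)$, and to observe that since the dimension $N$ of the subspace in hypothesis (5) of that theorem is arbitrary, we obtain arbitrarily many transmission eigenvalues, which implies there are infinitely many.

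First, I would fix one of the two parameter cases, say $\lambda \in (1,\infty)$, $\eta_{\max}<0$, and $\lambda n_{\max}-1<0$, and work with the operator $-\mathbb{L}_k$. I then check the five hypotheses of Theorem \ref{eiglemma} one by one: self-adjointness and continuous dependence on $k$ are supplied by Theorem \ref{Thm4.2}(1) together with the remark following its proof; coercivity of $-\mathbb{L}_0$ is Theorem \ref{Thm4.2}(2); compactness of $\mathbb{L}_k-\mathbb{L}_0$ (equivalently of $-\mathbb{L}_k+\mathbb{L}_0$) is Theorem \ref{Thm4.2}(3); the existence of $\alpha>0$ with $-\mathbb{L}_\alpha$ positive follows from Theorem \ref{T4.3}(1), where one may take any $\alpha$ with $\alpha^2<\mu_1(D)$; and the existence of $\beta>0$ for which $-\mathbb{L}_\beta$ is non-positive on an $N$-dimensional subspace of $H_0^1(D)$ is precisely Theorem \ref{T4.4}. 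With all five hypotheses verified, Theorem \ref{eiglemma} yields $N$ values $k_j \in (\alpha,\beta)$ such that the null-space of $-\mathbb{L}_{k_j}$ is non-trivial, and these $k_j$ are transmission eigenvalues of \eqref{eigproblem1}--\eqref{eigproblem2} by the equivalence established earlier.

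Letting $N \to \infty$ forces $\beta = \beta(N) \to \infty$ (since Theorem \ref{T4.4} constructs the $N$-dimensional subspace by packing $N$ disjoint balls $B_j \subset D$, each contributing an eigenvalue of the associated interior problem), so the collection of eigenvalues cannot be finite; hence there are infinitely many. The second parameter regime $\lambda \in (0,1)$, $\eta_{\min}>0$, $\lambda n_{\min}-1>0$ is handled identically after replacing $-\mathbb{L}_k$ by $+\mathbb{L}_k$ throughout and invoking the corresponding halves of Theorems \ref{Thm4.2}, \ref{T4.3}, and \ref{T4.4}. The main conceptual work of the argument is already absorbed into those preliminary results, so no new obstacle should arise here; the only subtlety worth writing out carefully is the observation that the $\beta$ from Theorem \ref{T4.4} can be taken arbitrarily large by shrinking the radius $\epsilon$ of the balls $B_j$, which is what guarantees the eigenvalues accumulate only at infinity and produces infinitely many of them.
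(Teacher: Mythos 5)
Your proposal is correct and follows essentially the same route as the paper: the paper's proof of this theorem is a one-line appeal to Theorem \ref{eiglemma}, with hypotheses (1)--(5) supplied by Theorems \ref{Thm4.2}, \ref{T4.3}, and \ref{T4.4} exactly as you list them. Your added remark that $N$ is arbitrary (so the eigenvalue count is unbounded, with $\beta(N)\to\infty$ coming from shrinking the balls in Theorem \ref{T4.4}) is the implicit final step the paper leaves unstated, and you have filled it in correctly.
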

\begin{proof}
The proof follows directly by applying Theorem \ref{eiglemma} where we have proven that our operator satisfies the assumptions in the previous results. 
\end{proof}

We have shown the existence of real transmission eigenvalues and we now wish to study how they depend on the parameters $\lambda$, $n$, and $\eta$. We will show monotonicity results for the first transmission eigenvalue with respect to the parameters $n$ and $\eta$. We have two different results with respect to $n$ and $\eta$. The first result shows that the first eigenvalue is an increasing function when $\lambda \in (1,\infty)$, $ \eta_{max} <0$, and $\lambda n_{max}-1<0$. Then we show that the first eigenvalue is a decreasing function when $\lambda\in (0,1)$, $\eta_{min}> 0$, and $\lambda n_{min}-1>0$. 

\begin{theorem}\label{mono1}
Assume that the parameters satisfy $\lambda \in (1,\infty)$, $ \eta_{max} <0$, and $\lambda n_{max}-1<0$. 
Therefore, we have that:
\begin{enumerate} 
\item If $n_1\leq n_2$ such that  $\lambda n_j-1<0$, then  $k_1(n_1)\leq k_1(n_2)$.
\item If  $\eta_1\leq \eta_{2}$ such that  $\eta_j<0$, then $k_1(\eta_1)\leq k_1(\eta_2)$.
\end{enumerate}
Here $k_1$ corresponds to the first transmission eigenvalue. 
\end{theorem}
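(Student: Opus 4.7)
The plan is to exploit the min-max structure induced by the self-adjoint operator $-\mathbb{L}_k$ constructed in Section \ref{exist}. From Theorems \ref{Thm4.2}--\ref{T4.4}, the first transmission eigenvalue in the Case 1 regime satisfies
$$k_1^2(n,\eta) = \sup\bigl\{ k^2 > 0 : -\mathbb{L}_k^{(n,\eta)} \text{ is positive definite on } H_0^1(D) \bigr\},$$
where the superscript records the parameter dependence. It therefore suffices to show that for each fixed $u \in H_0^1(D) \setminus \{0\}$ and each $k > 0$, the quadratic form $-\bigl(\mathbb{L}_k^{(n,\eta)} u, u\bigr)_{H^1(D)}$ is monotonically nondecreasing in $n$ (keeping $\lambda n < 1$) and in $\eta$ (keeping $\eta < 0$). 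Under this pointwise monotonicity the set inside the supremum only grows with $(n,\eta)$, and hence so does $k_1^2$.

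To verify this pointwise monotonicity I would start from the identity
$$-\bigl(\mathbb{L}_k u, u\bigr)_{H^1(D)} = \int_D (\lambda - 1)|\nabla w|^2 - k^2(\lambda n - 1)|w|^2 \, \text{d}x - \int_{\partial D} \eta |w|^2 \, \text{d}s + \int_D |\nabla u|^2 - k^2 |u|^2 \, \text{d}x,$$
with $w = u + v_u^{(n,\eta)}$ and $v_u^{(n,\eta)}$ the unique solution of the variational problem \eqref{UtoV}, derived inside the proof of Theorem \ref{Thm4.2}. Comparing this expression at two parameter pairs $(n_1,\eta_1) \leq (n_2,\eta_2)$ and subtracting the associated Euler--Lagrange equations \eqref{UtoV} for $v_u^{(n_1,\eta_1)}$ and $v_u^{(n_2,\eta_2)}$ leads, after testing against suitable functions and using $\lambda > 1$, $\lambda n < 1$, and $\eta < 0$, to the required monotonicity of the quadratic form.

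The delicate point is that the naive explicit dependencies of $-(\mathbb{L}_k u, u)$ on $\eta$ (via $-\int_{\partial D}\eta|w|^2 \, \text{d}s$) and on $n$ (via $-k^2(\lambda n-1)|w|^2$) point in the wrong direction, and one must show that the implicit dependence through $v_u^{(n,\eta)}$ furnishes a compensating term with the correct sign. The bulk of the technical work will be this sign bookkeeping, carried out by combining the identity above with the variational formulation \eqref{UtoV} at appropriately chosen test functions so that the cross terms produced by the difference $v_u^{(n_2,\eta_2)} - v_u^{(n_1,\eta_1)}$ can be absorbed using the strict inequalities in the parameter hypotheses. This is essentially a two-parameter adaptation of the type of monotonicity argument that appears in the anisotropic analysis of \cite{C and K}, and once it is in place the conclusion $k_1(n_1) \leq k_1(n_2)$ and $k_1(\eta_1) \leq k_1(\eta_2)$ follows immediately from the supremum characterization.
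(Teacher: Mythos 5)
Your high-level strategy is the same as the paper's: both arguments run through the self-adjoint operator $\mathbb{L}_k$, its positivity for small $k$ (Theorem \ref{T4.3}), and the existence machinery of Theorem \ref{eiglemma}, and both recognize that the explicit dependence of the quadratic form on $n$ and $\eta$ has the wrong sign and must be overcome by the implicit dependence through $v_u$. The difference is that the paper does not need (and does not prove) monotonicity of $-\bigl(\mathbb{L}_k u,u\bigr)$ for \emph{every} $u$ and $k$; it only compares the forms at the single point $k=k_1(n_2)$ and the single function $u_2=w_2-v_2$ built from the eigenpair for $n_2$, concluding that $-\mathbb{L}_{k_1(n_2)}^{(n_1)}$ is nonpositive on $\mathrm{span}\{u_2\}$ and then invoking Theorem \ref{eiglemma} directly to place an eigenvalue for $n_1$ in $(0,k_1(n_2)]$. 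Your supremum characterization of $k_1^2$ is therefore an extra assertion that the paper never needs; its nontrivial half (that $k_1^2$ does not exceed the supremum) itself requires the continuity of $k\mapsto\mathbb{L}_k$ plus Theorem \ref{eiglemma}, and you state it without justification.

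The more serious issue is that the step you defer as ``sign bookkeeping'' is the entire content of the proof, and your sketch of it (``subtracting the associated Euler--Lagrange equations \ldots testing against suitable functions'') does not identify the actual mechanism. What makes the compensation work is the following: writing $A_n(w,\phi)=\int_D(1-\lambda)\nabla w\cdot\nabla\overline{\phi}-k^2(1-\lambda n)w\overline{\phi}\,\text{d}x+\int_{\partial D}\eta w\overline{\phi}\,\text{d}s$, the substitution $w=u+v_u$ turns \eqref{UtoV} into the single identity $A_{n_1}(w,\phi)=\int_D\nabla u\cdot\nabla\overline{\phi}-k^2u\overline{\phi}\,\text{d}x=A_{n_2}(w_2,\phi)$ for all $\phi\in H^1(D)$. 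Under the hypotheses $\lambda>1$, $\eta<0$, $\lambda n_j-1<0$, the form $-A_n$ is a coercive Hermitian form, so one may take $\phi=w$, apply the Cauchy--Schwarz inequality to $-A_{n_2}(w_2,w)$, and combine it with the pointwise comparison $-A_{n_2}(\psi,\psi)\leq-A_{n_1}(\psi,\psi)$ (valid since $n_1\leq n_2$) to obtain $-A_{n_1}(w,w)\leq-A_{n_2}(w_2,w_2)$; adding $\int_D|\nabla u_2|^2-k^2|u_2|^2\,\text{d}x$ to both sides gives $-\bigl(\mathbb{L}_{k}^{(n_1)}u_2,u_2\bigr)\leq-\bigl(\mathbb{L}_{k}^{(n_2)}u_2,u_2\bigr)=0$ at $k=k_1(n_2)$. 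Without this Cauchy--Schwarz argument your proposal is a plan rather than a proof, so as written it has a genuine gap; with it, your stronger ``for all $u$ and $k$'' monotonicity does in fact follow by the identical computation, but it buys nothing beyond what the paper's pointwise version already delivers.
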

\begin{proof}
Here, we will prove part (1) for the theorem and part (2) can be handled in a similar manner. To this end, notice that if $n_1\leq n_2$, then we have $(1- \lambda n_2)\leq  (1-\lambda n_1).$ Assume that $\lambda \in (1,\infty)$, $ \eta_{max} <0$, and $\lambda n_{2}-1<0$, and that $v_2$ and $w_2$ are the transmission eigenfunctions corresponding to the transmission eigenvalue $k_2=k_1(n_2,\lambda, \eta)$. Therefore, from \eqref{26} we obtain that
$$ \int_D|\nabla u_2|^2-k_2^2|u_2|^2 \, \text{d}x+\int_D(\lambda-1)|\nabla w_2|^2+ k_2^2(1-\lambda n_2)|w_2|^2 \, \text{d}x-\int_{\partial D}\eta|w_2|^2 \, \text{d}s=0$$
where $u_2=w_2 - v_2 \in H^1_0(D)$.

Now, we have the existence of $v\in H^1(D)$ that solves the variational problem \eqref{UtoV} with $u=u_2$, $n=n_1$, and $k=k_2$. Then, we can define $w=v+u_2$. 
By rearranging the variational form in \eqref{UtoV} and using the definition  $w=v+u_2$ we have that 
\begin{align} \label{ww2}
\int_D (1-\lambda) \nabla w\cdot \nabla \overline{\phi}&-k_2^2(1-\lambda n_1)w\overline{\phi}  \, \text{d}x+\int_{\partial D}\eta w \overline{\phi}  \, \text{d}s\nonumber \\
&=\int_D \nabla u_2 \cdot \nabla \overline{\phi} -k^2_2 u_2\overline{\phi}  \, \text{d}x \nonumber \\
 &=\int_D (1-\lambda) \nabla w_2\cdot \nabla \overline{\phi}-k_2^2(1-\lambda n_2)w_2\overline{\phi}  \, \text{d}x+\int_{\partial D}\eta w_2 \overline{\phi}  \, \text{d}s.
\end{align}
Letting $\phi=w$ in \eqref{ww2} and using the Cauchy-Schwartz inequality as in the proof of Theorem \ref{T4.4}, we have that
\begin{align*} 
\int_{D}(\lambda-1)|\nabla w|^2 &-k_2^2(\lambda n_1-1)|w|^2 \, \text{d}x-\int_{\partial D}\eta|w|^2 \, \text{d}s \\
&\leq\int_{D}(\lambda-1)|\nabla w_2|^2 -k_2^2(\lambda n_2-1)|w_2|^2 \, \text{d}x-\int_{\partial D}\eta|w_2|^2 \, \text{d}s.
\end{align*}
We denote the operator $-\mathbb{L}_\tau$ as the operator with $n=n_1$. By appealing to the calculations in Theorem \ref{T4.3} and the above inequality we have that
\begin{align*}
    -(\mathbb{L}_{k_{2}}u_2,u_2)_{H^1(D)}&=-\int_D\nabla v\cdot \nabla \overline{u_2}-k_2^2 v\overline{u_2}  \, \text{d}x\\ 
    &=\int_{D} |\nabla u_2|^2- k_2^2 |u_2|^2 \, \text{d}x+\int_{D}(\lambda-1)|\nabla w|^2 -k_2^2(\lambda n_1-1)|w|^2 \, \text{d}x\\ 
    &\hspace{7cm}-\int_{\partial D}\eta|w|^2 \, \text{d}s\\
    &\leq \int_{D} |\nabla u_2|^2- k_2^2 |u_2|^2 \, \text{d}x+\int_{D}(\lambda-1)|\nabla w_2|^2 -k_2^2(\lambda n_2-1)|w_2|^2 \, \text{d}x\\
    &\hspace{7cm}-\int_{\partial D}\eta|w_2|^2 \, \text{d}s\\
    &=0.
\end{align*}
Since $-\mathbb{L}_{k_{2}}$ is nonpositive on the subspace spanned by $u_2$ we can conclude that there is an eigenvalue corresponding to $n_1$ in $(0,k_1(n_2)]$. Therefore, the first transmission eigenvalue $k_1(n_1)$ must satisfy that $k_1(n_1)\in (0,k_1(n_2)]$, proving the claim.
\end{proof}

Next, we have a similar monotonicity result with respect to the assumptions on the coefficients that  $\lambda\in (0,1)$, $\eta_{min}> 0$, and $\lambda n_{min}-1>0$. Since the proof is similar to what is presented in Theorem \ref{mono1} we omit the proof to avoid repetition. 

\begin{theorem}\label{mono2}
 Assume that parameters satisfy $\lambda\in (0,1)$, $\eta_{min}> 0$, and $\lambda n_{min}-1>0$. 
 Therefore, we have that:
\begin{enumerate} 
\item If $n_1\leq n_2$ such that $\lambda n_j-1>0$, then $k_1(n_2)\leq k_1(n_1)$, 
\item If $\eta_1\leq \eta_{2}$ such that  $\eta_j>0$, then $k_1(\eta_2)\leq  k_1(\eta_1)$.
\end{enumerate} 
Here $k_1$ corresponds to the first transmission eigenvalue. 
\end{theorem}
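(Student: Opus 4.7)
The plan is to mirror the argument of Theorem \ref{mono1}, but with $\mathbb{L}_k$ replacing $-\mathbb{L}_k$ and the direction of the monotonicity reversed to reflect the opposite sign regime of the parameters. For part (1), I would fix $n_1 \leq n_2$ with $\lambda n_j - 1 > 0$, set $k_1 := k_1(n_1)$, and let $(v_1, w_1)$ be a pair of transmission eigenfunctions for $n_1$ with difference $u_1 = w_1 - v_1 \in H^1_0(D)$. I would then construct $v \in H^1(D)$ as the unique solution of \eqref{UtoV} driven by the data $(u_1, n_2, k_1)$ and set $w = v + u_1$. Writing $\mathbb{L}_k^{(n)}$ for the operator built with coefficient $n$, the goal is to show $\big(\mathbb{L}_{k_1}^{(n_2)} u_1, u_1\big)_{H^1(D)} \leq 0 = \big(\mathbb{L}_{k_1}^{(n_1)} u_1, u_1\big)_{H^1(D)}$, so that $\mathbb{L}_{k_1}^{(n_2)}$ is non-positive on the one-dimensional subspace spanned by $u_1$; the concluding step from the proof of Theorem \ref{mono1} then forces $k_1(n_2) \leq k_1(n_1)$.

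The key technical manipulation is to rewrite the quadratic form in terms of $w$. Substituting $v = w - u$ into the definition of $\mathbb{L}_k$ and invoking \eqref{phi=w} yields
\begin{equation*}
\big(\mathbb{L}_k u, u\big)_{H^1(D)} = \langle w, w \rangle_n - \int_D |\nabla u|^2 \, \text{d}x + k^2 \int_D |u|^2 \, \text{d}x,
\end{equation*}
where I abbreviate
\begin{equation*}
\langle w_a, w_b \rangle_n := \int_D (1-\lambda) \nabla w_a \cdot \overline{\nabla w_b} \, \text{d}x + k^2 \int_D (\lambda n - 1) w_a \overline{w_b} \, \text{d}x + \int_{\partial D} \eta w_a \overline{w_b} \, \text{d}s.
\end{equation*}
Under the standing assumptions every coefficient in $\langle \cdot , \cdot \rangle_n$ is strictly positive, so this sesquilinear form is a genuine inner product on $H^1(D)$. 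Moreover, \eqref{phi=w} with fixed $u = u_1$ and $k = k_1$ reads $\langle w, \phi \rangle_{n_2} = \int_D \nabla u_1 \cdot \overline{\nabla \phi} - k_1^2 u_1 \overline{\phi} \, \text{d}x = \langle w_1, \phi \rangle_{n_1}$ for every $\phi \in H^1(D)$. Choosing $\phi = w$, combining $\langle w, w \rangle_{n_1} \leq \langle w, w \rangle_{n_2}$ (which follows from $n_1 \leq n_2$) with the Cauchy--Schwartz inequality for $\langle \cdot, \cdot \rangle_{n_1}$ delivers the crucial estimate $\langle w, w \rangle_{n_2} \leq \langle w_1, w_1 \rangle_{n_1}$, and subtracting the two explicit expressions for $\big(\mathbb{L}_{k_1}^{(n_j)} u_1, u_1\big)_{H^1(D)}$ for $j = 1,2$ then gives the desired non-positivity.

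Part (2) is handled identically after also tracking the dependence of $\langle \cdot , \cdot \rangle$ on $\eta$: the assumption $\eta_2 \geq \eta_1 > 0$ yields the corresponding monotonicity of the sesquilinear form through the boundary integral, and the rest of the argument is unchanged. The main obstacle I anticipate is purely in the bookkeeping: because $\mathbb{L}_k$ (rather than $-\mathbb{L}_k$) is the relevant operator in this regime, the direction of the monotonicity reverses, so one must begin with the eigenvalue for the \emph{smaller} coefficients and exhibit non-positivity at that value for the operator built from the \emph{larger} coefficients, opposite to the setup used in Theorem \ref{mono1}.
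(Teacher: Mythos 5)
Your proposal is correct and is precisely the adaptation of the Theorem \ref{mono1} argument that the paper alludes to when it omits this proof "to avoid repetition": you start from the eigenpair for the smaller coefficient, transplant its difference $u_1$ into the problem with the larger coefficient via \eqref{UtoV}, and use the identity \eqref{phi=w} together with Cauchy--Schwarz for the positive form $\langle\cdot,\cdot\rangle_n$ to show $\mathbb{L}_{k_1(n_1)}^{(n_2)}$ is non-positive on $\mathrm{span}\{u_1\}$. The reversal of which coefficient you start from (smaller rather than larger) is exactly the right bookkeeping change forced by working with $+\mathbb{L}_k$ in this parameter regime, and the chain $\langle w,w\rangle_{n_2}=\langle w_1,w\rangle_{n_1}\leq\langle w_1,w_1\rangle_{n_1}^{1/2}\langle w,w\rangle_{n_1}^{1/2}\leq\langle w_1,w_1\rangle_{n_1}^{1/2}\langle w,w\rangle_{n_2}^{1/2}$ closes the argument correctly.
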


From Theorems \ref{mono1} and \ref{mono2} this we can see that the first transmission eigenvalue depend monotonically on some of the material parameters $n$ and $\eta$. Notice, that we are unable to prove a similar monotonicity result with respect to $\lambda$ due to showing up in the variational definition of $\mathbb{L}_k$ in different terms with different signs. We will present some numerics for the monotonicity with respect to $\lambda$ in Section \ref{numerics}.

\section{Convergence as the conductivity $\lambda$ goes to $1$}\label{limit}

In this section, we study the convergence of the transmission eigenvalues in the sense of whether or not we have that $k(\lambda) \longrightarrow k(1)$ as $\lambda\longrightarrow1$ where $k(1)$ is the transmission eigenvalue corresponding to $\lambda=1$. Throughout this section, we will assume that the transmission eigenvalues $k(\lambda)=k_{\lambda}\in \mathbb{R}_+$ form a bounded set as $\lambda\to 1$. From this we have that the set will have a limit point as $\lambda$ tends to one. For the eigenfunctions $v_\lambda$ and $w_\lambda$, we may assume that they are normalized in $H^1(D)$ such that 
$$\|v_\lambda\|^2_{H^1(D)}+\|w_\lambda\|^2_{H^1(D)}=1$$
 for any $\lambda \in (0,1)\cup (1,\infty)$. {As a result}, we have that $(k_{\lambda},v_\lambda,w_\lambda)\in \mathbb{R}_+\times H^1(D)\times H^1(D)$ are bounded, so there exists $(\kappa , \hat{v},\hat{w})\in \mathbb{R}_+\times H^1(D)\times H^1(D)$ such that 
$$k_\lambda\longrightarrow \kappa$$
as well as 
$$ w_\lambda\weakc \hat{w} \hspace{.3cm}\text{and}\hspace{.3cm}v_\lambda\weakc \hat{v} \hspace{.3cm}\text{in}\hspace{.3cm}H^1(D)\hspace{.3cm}\text{as}\hspace{.3cm} \lambda\longrightarrow 1.$$

Now, our task is to show that the limits $ \hat{w}$ and $ \hat{v}$ satisfies the transmission eigenvalue problem when we let $\lambda=1$ with eigenvalue $\kappa$. 
To this end, we begin by showing that the difference of the eigenfunctions $u_\lambda =w_\lambda - v_\lambda$ is bounded with respect to $\lambda$ in the $H^2(D)$--norm. To this end, by \eqref{eigproblem1} we have that
$$\Delta u_{\lambda}+k_{\lambda}^2nu_{\lambda}=  - k_{\lambda}^2(n-1)v_{\lambda} \quad \text{ in } \,\, D.$$
Notice, the fact that $u_\lambda \in H^2(D)\cap H^1_0(D)$ is  given by appealing to standard elliptic regularity results. Observe that
$\|\Delta \cdot\|_{L^2(D)}$ is equivalent to $ \|\cdot\|_{H^2(D)}$ in  
 $H^2(D)\cap H^1_0(D)$ (see for e.g. \cite{salsa}). Therefore, we can bound the  $H^2(D)$--norm of $u_{\lambda}$ using the above equation such that 
 $$\|u_{\lambda}\|_{H^2(D)}\leq C \| \Delta u_\lambda \|_{L^2(D)} \leq C\big\{\|u_{\lambda}\|_{L^2(D)}+\|v_{\lambda}\|_{L^2(D)} \big\}.$$
 Notice, we have used the fact that $n \in L^{\infty}(D)$ and that $k_\lambda$ is bounded with respect to $\lambda$. This implies that, $u_\lambda$ is bounded in $H^2(D)\cap H^1_0(D)$ i.e. 
 $$u_\lambda \weakc \hat{u}= \hat{w} -  \hat{v} \quad \text{in } \quad H^2(D)\cap H^1_0(D) \quad \text{as  } \quad \lambda \to 1.$$

We want to determine which boundary value problem the functions $\hat{u}$ and $ \hat{v}$ satisfy. To this end, we take $\phi\in H^1(D)$ and integrate over the region $D$ to obtain 
$$\int_D(\Delta u_\lambda+k^2_\lambda nu_\lambda)\overline{\phi} \, \text{d}x=-k_\lambda^2 \int_D(n-1)v_\lambda \overline{\phi} \, \text{d}x.$$
Notice, that since $k^2_\lambda\to \kappa^2$ as well as $v_\lambda \to \hat{v}$ in $L^2(D)$ and $u_\lambda \weakc \hat{u}$ in $H^2(D)\cap H^1_0(D)$ as  
$\lambda\to 1$ we have that 
$$\int_D\overline{\phi}\Big[\Delta \hat{u}+\kappa^2n\hat{u}+ \kappa^2(n-1) \hat{v}\Big] \, \text{d}x=0 \quad \text{ for all } \quad \phi \in H^1(D).$$
This implies that  
$$\Delta \hat{u}+\kappa^2n\hat{u}=- \kappa^2(n-1) \hat{v} \quad \text{in }\,\, D.$$
Using a similar argument, we have that 
$$ \Delta \hat{v} +\kappa^2 \hat{v} = 0 \quad \text{ in } \,\, D.$$
Notice, that $u_\lambda|_{\partial D} = 0$ and by the Trace Theorem we have that 
$$\partial_{\nu}u_\lambda|_{\partial D}\in H^{1/2}(\partial D), \quad v_\lambda|_{\partial D}\in H^{1/2}(\partial D), \quad \text{and} \,\,\,\partial_{\nu}v_\lambda|_{\partial D}\in H^{-1/2}(\partial D)$$
are bounded. This implies that the above boundary values weakly converge to the corresponding boundary values for the weak limits. Now, multiplying by $\phi \in H^{1/2}(\partial D)$ and integrating over $\partial D$ in equation \eqref{eigproblem4} we have that
$$\int_{\partial D}  {\phi} \big[ \lambda \partial_{\nu}u_\lambda - \eta v_\lambda \big]  \, \text{d}s = (1-\lambda) \int_{\partial D}{\phi} \partial_{\nu}v_\lambda \, \text{d}s.$$
We can then estimate
\begin{align*}
    \Big|\int_{\partial D}  {\phi} \big[ \lambda \partial_{\nu}u_\lambda - \eta v_\lambda \big]  \, \text{d}s\Big|&\leq | 1-\lambda| \int_{\partial D}|{\phi} \partial_{\nu}v_\lambda | \, \text{d}s\\
    &\leq  |1-\lambda| \|\partial_{\nu}v_\lambda\|_{H^{-1/2}(\partial D)}\|\phi \|_{H^{1/2}(\partial D)}\\
    &\leq C|1-\lambda| \big\{\|v_\lambda\|_{H^1(D)}+\|\Delta v_\lambda\|_{L^2(D)} \big\}\| \phi \|_{H^{1/2}(\partial D)}.
\end{align*}
Notice, that the quantity  
$$\|v_\lambda\|_{H^1(D)}+\|\Delta v_\lambda\|_{L^2(D)}$$ 
is bounded due to the normalization and the  fact that $v_\lambda$ satisfies the Helmholtz equation in $D$. As we let $\lambda\to 1$, we have that 
$$ \int_{\partial D}{\phi} \big[ \partial_{\nu} \hat{u} -\eta  \hat{v} \big] \, \text{d}s=0 \quad \text{ for all } \quad \phi \in H^{1/2}(\partial D).$$
We can conclude that 
$$ \partial_{\nu}\hat{u}= \eta \hat{v} \quad \text{ on } \,\, \partial D.$$ 
Which gives the boundary value problem for the limits. 

Next, we show that as $\lambda\longrightarrow 1$ we have that $u_\lambda\longrightarrow \hat{u}$ in $H^2(D)\cap H^1_0(D)$. From the above analysis, we have obtained that  
\begin{align}\label{29}
\Delta u_\lambda+k_\lambda^2nu_\lambda= - k_\lambda^2(n-1)v_\lambda   \quad \text{ and } \quad   \Delta {v}_\lambda +k_\lambda^2 {v}_\lambda = 0  \hspace{.2cm}& \text{in} \hspace{.2cm}  D\\
 \lambda\partial_{\nu}u_\lambda=(1-\lambda)\partial_{\nu}v_\lambda+\eta v_\lambda \hspace{.2cm}& \text{on} \hspace{.2cm}  \partial D\label{30}
\end{align}
as well as
\begin{align}\label{31}
 \Delta \hat{u}+\kappa^2n\hat{u}=-\kappa^2(n-1) \hat{v} \quad \text{ and } \quad   \Delta \hat{v} +\kappa^2 \hat{v} = 0   \hspace{.2cm}& \text{in} \hspace{.2cm}  D\\
 \partial_{\nu}\hat{u}=\eta  \hat{v}  \hspace{.2cm}& \text{on} \hspace{.2cm}  \partial D\label{32}. 
\end{align}
Notice, that \eqref{31}--\eqref{32} is the transmission eigenvalue problem for $\lambda=1$ as studied in \cite{te-cbc}. This analysis implies that provided that the weak limits are non-trivial as $\lambda \to 1$ we have that $k_\lambda$ converges to the transmission eigenvalue for $\lambda=1$. In order to prove that the weak limits $ \hat{u}$ and $\hat{v}$ are non-trivial we need the following results.

\begin{theorem}
Assume that the coefficients satisfy the assumptions of Theorem \ref{exist-thm} and $k_{\lambda}\in \mathbb{R}_+$ forms a bounded set as $\lambda\longrightarrow 1$. Then $u_\lambda\longrightarrow \hat{u}$ in $H^2(D)\cap H^1_0(D)$ as $\lambda\longrightarrow 1.$
\end{theorem}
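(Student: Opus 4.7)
The plan is to exploit the PDE satisfied by $u_\lambda$ together with the norm equivalence $\|\Delta \cdot\|_{L^2(D)}\simeq \|\cdot\|_{H^2(D)}$ on $H^2(D)\cap H^1_0(D)$ (already invoked earlier in this section). Once we know that both $u_\lambda$ and $\hat{u}$ lie in this space, strong $H^2$ convergence of $u_\lambda$ to $\hat{u}$ reduces to showing that $\Delta u_\lambda \to \Delta\hat{u}$ strongly in $L^2(D)$.

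First, I would verify that $\hat{u}\in H^2(D)\cap H^1_0(D)$. The zero trace is inherited because $H^1_0(D)$ is a closed subspace of $H^1(D)$ and $u_\lambda\weakc\hat{u}$ weakly in $H^2(D)\cap H^1_0(D)$. Regularity then follows from \eqref{31}, since the right-hand side $-\kappa^2n\hat{u}-\kappa^2(n-1)\hat{v}$ lies in $L^2(D)$ and standard elliptic regularity applies to the Dirichlet problem for $-\Delta$.

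Next, I would subtract \eqref{29} and \eqref{31} and rewrite
\begin{align*}
\Delta u_\lambda - \Delta\hat{u}
&= -(k_\lambda^2-\kappa^2)\,n\,\hat{u} - k_\lambda^2\,n\,(u_\lambda-\hat{u})\\
&\quad - (k_\lambda^2-\kappa^2)(n-1)\hat{v} - k_\lambda^2(n-1)(v_\lambda-\hat{v}).
\end{align*}
The first and third terms vanish in $L^2(D)$ because $k_\lambda\to\kappa$ and $n\in L^\infty(D)$ with $\hat{u},\hat{v}\in L^2(D)$. For the remaining two terms, I would invoke the Rellich--Kondrachov theorem: the weak convergences $u_\lambda\weakc\hat{u}$ in $H^1(D)$ (a fortiori in $H^2\cap H^1_0$) and $v_\lambda\weakc\hat{v}$ in $H^1(D)$ imply strong $L^2(D)$ convergence, so these terms also tend to zero in $L^2(D)$, using once more that $n$ and $k_\lambda$ are bounded.

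Finally, since $u_\lambda - \hat{u}\in H^2(D)\cap H^1_0(D)$, the norm equivalence yields
$$\|u_\lambda - \hat{u}\|_{H^2(D)}\leq C\|\Delta(u_\lambda-\hat{u})\|_{L^2(D)}\longrightarrow 0\quad\text{as}\quad\lambda\to 1,$$
establishing the claim. The only delicate point I foresee is justifying the regularity $\hat{u}\in H^2(D)\cap H^1_0(D)$ rigorously so that the norm equivalence applies to the difference; everything else is a routine combination of the PDE and compact embedding. Strong convergence of $v_\lambda$ in $H^1(D)$ is not needed here, but one could additionally extract it from the equation $\Delta v_\lambda+k_\lambda^2 v_\lambda=0$ by an analogous argument should it be required in subsequent use (e.g. to establish non-triviality of the weak limits).
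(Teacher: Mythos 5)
Your proof is correct and follows essentially the same route as the paper's: subtract the equations for $u_\lambda$ and $\hat{u}$, bound $\|\Delta(u_\lambda-\hat{u})\|_{L^2(D)}$ by $\|u_\lambda-\hat{u}\|_{L^2(D)}+|k_\lambda^2-\kappa^2|+\|v_\lambda-\hat{v}\|_{L^2(D)}$, send these to zero via $k_\lambda\to\kappa$ and the compact embedding of $H^1(D)$ into $L^2(D)$, and conclude with the equivalence of $\|\Delta\cdot\|_{L^2(D)}$ and $\|\cdot\|_{H^2(D)}$ on $H^2(D)\cap H^1_0(D)$. Your explicit verification that $\hat{u}\in H^2(D)\cap H^1_0(D)$ is a small point the paper leaves implicit (it already follows from the weak convergence $u_\lambda\rightharpoonup\hat{u}$ in that space established earlier in the section), but it does not change the argument.
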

\begin{proof}

We subtract \eqref{29} from \eqref{31} to get the following
$$\Delta(u_\lambda-\hat{u})=-k_\lambda^2n(u_\lambda-\hat{u})+n\hat{u}(k_\lambda^2-\kappa^2)+(1-n) \Big(k_\lambda^2(v_\lambda-\hat{v})+\hat{v}(k_\lambda^2-\kappa^2) \Big).$$
Recall, that $u_\lambda$ and $\hat{u} \in H^2(D)\cap H^1_0(D)$. Therefore, by taking $L^2(D)$ norm on both sides we obtain the estimate
$$\|\Delta (u_\lambda-\hat{u}) \|_{L^2(D)}\leq C\big\{ \|u_\lambda-\hat{u}\|_{L^2(D)}+|k_\lambda^2-\kappa^2|+\|v_\lambda-\hat{v}\|_{L^2(D)} \big\}.$$
Where we have used the triangle inequality and that $n$ and $k_\lambda^2$ are both bounded with respect to $\lambda$. Again, using the fact that $\|\Delta \cdot\|_{L^2(D)}$ is equivalent to $\|\cdot\|_{H^2(D)}$ in $H^2(D)\cap H^1_0(D)$ gives us 
$$\|u_\lambda-\hat{u}\|_{H^2(D)}\leq C\big\{ \|u_\lambda-\hat{u}\|_{L^2(D)}+|k_\lambda^2-\kappa^2|+\|v_\lambda-\hat{v}\|_{L^2(D)} \big\}.$$ 
 The above inequality implies that  $u_\lambda\longrightarrow \hat{u}$ in $H^2(D)\cap H^1_0(D)$ as  $\lambda \longrightarrow 1$ by the compact embedding of $H^1(D)$ into $L^2(D)$. 
\end{proof} 

We will now use the above convergence result to prove that $\hat{u}\in H^2(D)\cap H^1_0(D)$ is non-trivial under some further assumptions. 

\begin{theorem}\label{5.2}
Assume that the coefficients satisfy the assumptions of Theorem \ref{exist-thm} as well as $n-1\neq 0$ a.e. in $D$ and $\partial_{\nu}v_{\lambda}$ is bounded in $L^2(\partial D)$. Then $\hat{u}$ is non-trivial. 
\end{theorem}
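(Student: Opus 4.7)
The plan is to argue by contradiction. Suppose $\hat{u} \equiv 0$. The first step is to extract information about $\hat{v}$ from the limiting PDE $\Delta \hat{u}+\kappa^2 n\hat{u}=-\kappa^2(n-1)\hat{v}$ established in the excerpt. Setting $\hat{u}=0$ gives $\kappa^2(n-1)\hat{v}=0$ a.e.\ in $D$. To convert this into $\hat{v}=0$, I need $\kappa\neq 0$, and I would obtain this from the uniform lower bounds in Theorem \ref{T4.3}: depending on the parameter regime, $k_\lambda^2\geq \mu_1(D)$ or $k_\lambda^2\geq \mu_1(D)/n_{max}$, both independent of $\lambda$, forcing $\kappa>0$. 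Combined with the hypothesis $n-1\neq 0$ a.e., this yields $\hat{v}=0$, and therefore also $\hat{w}=\hat{u}+\hat{v}=0$.

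The second step is to upgrade the weak convergences $v_\lambda\weakc 0$ and $w_\lambda\weakc 0$ in $H^1(D)$ to strong convergences, which will contradict the normalization $\|v_\lambda\|_{H^1(D)}^2+\|w_\lambda\|_{H^1(D)}^2=1$. By the compact embedding $H^1(D)\hookrightarrow L^2(D)$ I immediately get $v_\lambda\to 0$ in $L^2(D)$, and by the compactness of the trace map $H^1(D)\to L^2(\partial D)$ I also get $v_\lambda|_{\partial D}\to 0$ in $L^2(\partial D)$. The crucial ingredient is the assumption that $\partial_\nu v_\lambda$ is bounded in $L^2(\partial D)$. Using Green's first identity on the Helmholtz equation $\Delta v_\lambda+k_\lambda^2 v_\lambda =0$ with test function $\bar{v}_\lambda$, I obtain
\begin{equation*}
\|\nabla v_\lambda\|_{L^2(D)}^2 = k_\lambda^2\|v_\lambda\|_{L^2(D)}^2 + \int_{\partial D} \bar{v}_\lambda \, \partial_\nu v_\lambda \, \text{d}s.
\end{equation*}
The first term on the right tends to zero since $k_\lambda$ is bounded and $\|v_\lambda\|_{L^2(D)}\to 0$. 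For the boundary term, Cauchy--Schwarz combined with the boundedness of $\partial_\nu v_\lambda$ in $L^2(\partial D)$ and the strong convergence of the trace gives
\begin{equation*}
\left| \int_{\partial D} \bar{v}_\lambda \, \partial_\nu v_\lambda \, \text{d}s \right| \leq \|\partial_\nu v_\lambda\|_{L^2(\partial D)} \|v_\lambda\|_{L^2(\partial D)} \longrightarrow 0.
\end{equation*}
Hence $v_\lambda\to 0$ strongly in $H^1(D)$.

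To conclude, the previous theorem gave $u_\lambda\to \hat{u}=0$ in $H^2(D)\cap H^1_0(D)$, which in particular yields $u_\lambda\to 0$ in $H^1(D)$. Therefore $w_\lambda=u_\lambda+v_\lambda\to 0$ in $H^1(D)$ as well, and together
\begin{equation*}
\|v_\lambda\|_{H^1(D)}^2+\|w_\lambda\|_{H^1(D)}^2 \longrightarrow 0,
\end{equation*}
contradicting the normalization which fixes this sum at $1$. The contradiction forces $\hat{u}\not\equiv 0$. The main obstacle I anticipate is ensuring that $\kappa>0$ so that we can divide through in step one; the uniform Poincar\'e-type lower bound from Theorem \ref{T4.3} is exactly what handles this, which is why the hypotheses of Theorem \ref{exist-thm} (inherited here) are essential and not merely decorative. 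The role of the $L^2(\partial D)$-boundedness hypothesis on $\partial_\nu v_\lambda$ is also pivotal, since without it the boundary term in Green's identity cannot be controlled.
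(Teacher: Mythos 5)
Your proposal is correct and follows essentially the same route as the paper's proof: contradiction via $\hat u=0$, the lower bound on $k_\lambda^2$ from Theorem \ref{T4.3} to force $\hat v=0$, Green's identity on the Helmholtz equation together with the $L^2(\partial D)$ bound on $\partial_\nu v_\lambda$ and compactness of the trace to upgrade $v_\lambda\weakc 0$ to strong $H^1(D)$ convergence, and finally the contradiction with the normalization. Your explicit appeal to the preceding theorem for $u_\lambda\to 0$ in $H^1(D)$ is a slightly cleaner justification than the paper's, but the argument is the same.
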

\begin{proof}
For contradiction, assume $\hat{u} =0$. Now, recall that we have 
$$\Delta u_{\lambda}+k_{\lambda}^2nu_{\lambda}=- k_{\lambda}^2(n-1)v_{\lambda}$$
 and by the convergence as $\lambda\to 1$ we have that 
 $$0=- \kappa^2(n-1)\hat{v} \quad \text{  in $D$}.$$ 
 Now, as we have that $k_{\lambda}^2$ is bounded below as a consequence of Theorem \ref{T4.3} and $n-1\neq 0$, this implies that $\hat{v}=0.$ Thus, we have that $v_\lambda \rightharpoonup 0$ in $H^1(D)$ and by compact embedding $v_{\lambda}\to 0$ in $L^2(D).$ We will now show that $\nabla v_{\lambda}$ strongly converges to the zero vector. Recall, that the function $v_{\lambda}\in H^1(D)$ satisfies Helmholtz equation, i.e. $\Delta v_{\lambda}+k_{\lambda}^2v_{\lambda}=0$ in $D$. Using Green's 1st Theorem gives
$$\int_{\partial D}\overline{\phi}\partial_{\nu}v_{\lambda} \, \text{d}s=\int_{D}\overline{\phi}\Delta v_{\lambda}+\nabla v_{\lambda}\cdot \nabla\overline{\phi} \, \text{d}x\hspace{.5cm}\text{for}\hspace{.5cm}\phi\in H^1(D).$$
Letting $\phi=v_{\lambda}$ in the above equality gives that 
$$\int_{\partial D} \overline{v_{\lambda}} \partial_{\nu}v_{\lambda} \, \text{d}s=\int_{D} \overline{v_{\lambda}} \Delta v_{\lambda}+ |\nabla v_{\lambda}|^2 \, \text{d}x=-\int_{D}k_{\lambda}^2|v_{\lambda}|^2 \, \text{d}x+\int_{D}|\nabla v_{\lambda}|^2  \, \text{d}x.$$
Observe that 
$$\int_{D}|\nabla v_{\lambda}|^2  \, \text{d}x=\int_{D}k_{\lambda}^2|v_{\lambda}|^2 \, \text{d}x+\int_{\partial D}\overline{v_{\lambda}} \partial_{\nu}v_{\lambda} \, \text{d}s.$$
Using the Cauchy-Schwarz inequality we get that 
$$\|\nabla v_{\lambda}\|^2_{L^2(D)}\leq \|\partial_{\nu}v_{\lambda}\|_{L^2(\partial D)} \|v_{\lambda}\|_{L^2(\partial D)}+k^2_{\lambda}\|v_{\lambda}\|^2_{L^2(D)}$$
which implies that 
$$\|\nabla v_{\lambda}\|^2_{L^2(D)}\leq C \big\{  \|v_{\lambda}\|_{L^2(\partial D)}+ \|v_{\lambda}\|^2_{L^2(D)} \big\}$$
since we have assumed that $\|\partial_{\nu}v_{\lambda}\|_{L^2(\partial D)}$ and $k_\lambda$ are bounded. 
By the compact embedding of $H^{1/2}(\partial D)$ into $L^2(\partial D)$ we have that
 $$v_{\lambda} \rightharpoonup 0 \, \text{ in } \, H^{1/2}(\partial D) \quad \text{ implies } \quad v_{\lambda}\to 0 \, \text{ in } \, L^2(\partial D).$$
Using the fact that  $v_{\lambda}\to 0$ in $L^2(D)$ we can conclude that $v_\lambda \to 0$  in $H^1(D)$ by the above inequality. Therefore, we have that both $u_\lambda$ and  $v_\lambda$ converge to zero in $H^1(D)$. Now, because we have that $u_\lambda=w_\lambda-v_\lambda$ we obtain that $w_\lambda$ converges to zero in $H^1(D)$. This contradicts the normalization 
 $$\|v_\lambda\|^2_{H^1(D)}+\|w_\lambda\|^2_{H^1(D)}=1$$ 
 proving the claim.
 \end{proof}

Now, putting everything together, we are able to state the main result of this section. Here, we have that as $\lambda \to 1$ the transmission eigenvalues will have a limit that corresponds to standard eigenvalue problem when $\lambda = 1$ under some assumptions. 

\begin{theorem}\label{conv}
Assume that the coefficients satisfy the assumptions of Theorem \ref{exist-thm} as well as $n-1\neq 0$ a.e. in $D$ and $\partial_{\nu}v_{\lambda}$ is bounded in $L^2(\partial D)$. Then, we have that $k_{\lambda} \to k(1)$ as $\lambda\to 1$ where $k(1)$ is a  transmission eigenvalue corresponding to $\lambda =1$. 
\end{theorem}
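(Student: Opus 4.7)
My plan is to piece together the results that have been established in the discussion preceding the theorem; almost all the analytical work is already in place, and the proof is essentially an assembly argument.

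First, I would extract convergent subsequences. Given any sequence $\lambda_j \to 1$, by the standing assumption $k_{\lambda_j}$ is bounded in $\mathbb{R}_+$, so along a subsequence $k_{\lambda_j} \to \kappa$ for some $\kappa \in \mathbb{R}_+$. The normalization $\|v_\lambda\|_{H^1(D)}^2 + \|w_\lambda\|_{H^1(D)}^2 = 1$ gives weak compactness, so upon passing to a further subsequence $v_{\lambda_j} \weakc \hat{v}$ and $w_{\lambda_j} \weakc \hat{w}$ in $H^1(D)$. As established before the statement, $u_{\lambda_j} = w_{\lambda_j} - v_{\lambda_j}$ is bounded in $H^2(D)\cap H^1_0(D)$, so $u_{\lambda_j} \weakc \hat{u} = \hat{w} - \hat{v}$ in that space and strongly in $H^1(D)$.

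Next, I would identify the limiting problem. The discussion leading to \eqref{31}--\eqref{32} shows that $(\hat{u},\hat{v})$ satisfy
\begin{align*}
\Delta \hat{u} + \kappa^2 n \hat{u} = -\kappa^2(n-1)\hat{v}, \quad \Delta \hat{v} + \kappa^2 \hat{v} = 0 \quad & \text{in } D, \\
\partial_\nu \hat{u} = \eta \hat{v} \quad & \text{on } \partial D,
\end{align*}
which, rewritten in terms of $\hat{w} = \hat{u} + \hat{v}$ and $\hat{v}$, is exactly the transmission eigenvalue system \eqref{eigproblem1}--\eqref{eigproblem2} with $\lambda = 1$ and eigenvalue $\kappa$. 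Theorem \ref{5.2} guarantees that $\hat{u}$ is non-trivial, hence so is the eigenpair $(\hat{v},\hat{w})$, which certifies $\kappa$ as a genuine transmission eigenvalue of the one-parameter problem, i.e. $\kappa = k(1)$.

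Finally, I would argue that the full sequence converges by the standard subsequence principle: every subsequence of $\{k_\lambda\}$ admits a further subsequence converging to some transmission eigenvalue $k(1)$ of the limiting problem. Provided one interprets the statement as saying that every accumulation point of $\{k_\lambda\}$ is such an eigenvalue (as is natural here, since the $\lambda=1$ spectrum is itself discrete by \cite{te-cbc}), this yields the claim. The only step that is not entirely routine is the non-triviality of the limit, but this is precisely the content of Theorem \ref{5.2} under the additional hypotheses $n-1\neq 0$ and boundedness of $\partial_\nu v_\lambda$ in $L^2(\partial D)$, which is why those hypotheses appear in the statement. With these inputs in hand, the proof reduces to invoking Theorem \ref{5.2} and the preceding convergence theorem, together with the PDE identifications of the weak limits.
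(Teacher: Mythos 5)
Your proposal is correct and follows essentially the same route as the paper: the paper's own proof of this theorem is a one-line statement that it is "a simple consequence of the analysis presented in this section," and the content you assemble (boundedness and weak compactness from the normalization, identification of the limiting system \eqref{31}--\eqref{32} as the $\lambda=1$ transmission eigenvalue problem, and non-triviality of the limit via Theorem \ref{5.2}) is exactly that analysis. Your explicit invocation of the subsequence principle to upgrade subsequential convergence to convergence of accumulation points is a reasonable and slightly more careful packaging of what the paper leaves implicit.
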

\begin{proof}
The proof is a simple consequence of the analysis presented in this section. 
 \end{proof}

{We note that since $k_{\lambda}$ and $k(1)$ are chosen arbitrarily, the above result holds for all transmission eigenvalues, without assuming their exact position in the real spectrum. This means that for the ordered subsequence of real eigenvalues we have  $k_{\lambda,j} \to k_j(1)$ for all $j=1,2,\dots$, where $k_{\lambda,1}$ is the first, $k_{\lambda,2}$ the second etc.} 

We have shown the monotonicity with respect to $n$ and $\eta$ where as now we have an understanding of the limiting process as $\lambda \to 1$. In the case of inverse problems, it is very useful to understand how the eigenvalues of a differential operator depend on the coefficients. From an application perspective, this implies that the transmission eigenvalues can be used as a target signature to determine information about the scatterer since the eigenvalues can be recovered from the scattering data. 

\section{Numerical Validation}\label{numerics}
In this section, we provide some numerical examples that validate the theoretical results from the previous sections. First, we will {give} some numerical examples of the convergence $k(\lambda)\longrightarrow k(1)$ as $\lambda \to 1$ in Theorem \ref{conv} for the unit ball with constant coefficients. Here we will consider the convergence and estimate the rate of convergence for the case when $\lambda \in (0,1)$ and $\lambda \in (1,\infty)$. Then, we will provide some examples {for} the monotonicity of the eigenvalues with respect to the parameters $n$ and $\eta$ given in Theorems \ref{mono1} and \ref{mono2}. Lastly, we will also report the transmission eigenvalues for other shapes using boundary integral equations.

\subsection{Validation on the unit disk for the Convergence of $\lambda$}
Here, we consider the convergence of the $k_\lambda$ as $\lambda \to 1^{\pm}$. For this we will assume that $D=B(0,1) \subset \R^2$ (i.e. the unit disk centered at the origin) and that coefficients $n,\eta,$ and $ \lambda$ are all constants. Under these assumptions, we recall that the transmission eigenvalue problem is given by 
\begin{align}
\Delta w+k^2 n w = 0  \quad \text{ and}  \quad \Delta v+k^2v = 0 \hspace{.2cm}& \text{in} \hspace{.2cm} B(0,1) \label{33} \\
w=v  \quad \text{ and}  \quad  \lambda \partial_{r}w=\partial_{r}v+\eta v \hspace{.2cm}& \text{on} \hspace{.2cm}  \partial B(0,1). \label{36}
\end{align}
Motivated by separation of variables, we try to find eigenfunctions of the form 
$$w(r,\theta)=w_m(r)\text{e}^{\text{i}m\theta} \quad \text{ and } \quad v(r,\theta)=v_m(r)\text{e}^{\text{i}m\theta}$$
 where $m\in \mathbb{Z}$. From this we obtain that {$w_m(r) = \alpha_m J_m(k\sqrt{n}r)$ and $v_m(r)=\beta_m J_m(kr)$} where both {$\alpha_m$} and {$\beta_m$} are constants. Therefore, applying the boundary conditions at $r=1$ gives that the transmission eigenvalues are given by the roots of $d_m(k),$ {defined} by 
\begin{equation}\label{41}
 \hspace{2cm}d_m(k):=\text{det}\begin{pmatrix}
J_m(k\sqrt{n}) &-J_m(k)\\
\lambda J^{'}_{m}(k\sqrt{n})k\sqrt{n} & -\big(kJ^{'}_{m}(k)+\eta J_m(k) \big)
\end{pmatrix}.
\end{equation}
Here we let $J_m(t)$ denote the Bessel functions of the first kind of order $m$. 
\begin{figure}[ht]
\centering 
\includegraphics[width=.65\linewidth]{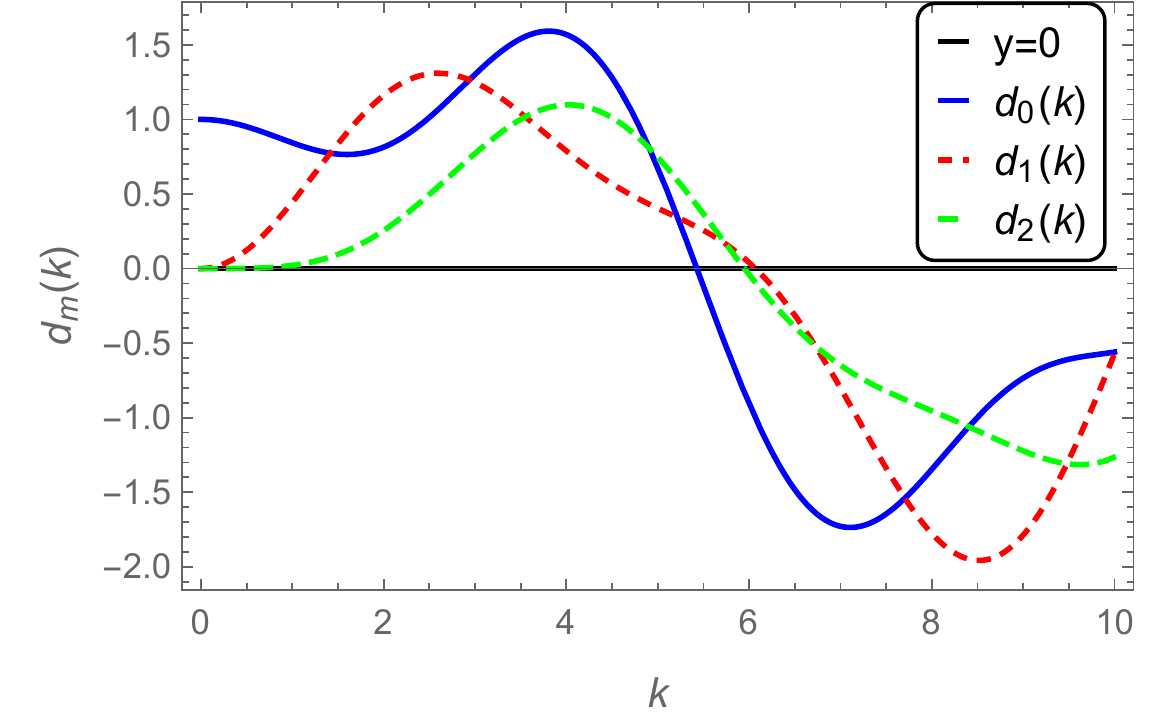}
\caption{The plots of the {determinant} function $d_m(k)$ for $m=0,1,2$. Here the parameters are given by $n=1/6$, $\lambda = 5$, and $\eta=-1$. }
\label{plot-of-func}
\end{figure}

Letting $k_\lambda$ be the root(s) of $d_m(k)$, we can see that the eigenfunctions are given by 
$$w_{\lambda}(r,\theta)=J_m(k_\lambda)J_m(k_{\lambda}\sqrt{n}r)\text{e}^{\text{i}m\theta} \quad \text{ and } \quad v_{\lambda}(r,\theta)=J_m(k_{\lambda}\sqrt{n})J_m(k_{\lambda}r)\text{e}^{\text{i}m\theta} .$$ 
One can easily check that such forms satisfy the boundary conditions and also that if $k_\lambda$ forms a bounded set then
$$\| \partial_{r}v_\lambda (1,\theta)\|_{L^2(0,2 \pi)} \quad \text{ is bounded with respect to $\lambda$. }$$

We note that the position of each eigenvalue on the spectrum, is not directly associated with the order $m$ of the determinant $d_m(k)$, of which is a root. This means for e.g., that the the lowest eigenvalue $k_1$ can be the first root of $d_1(k)$ (or of other order) and not $d_0(k)$. As a result, in the examples following, we calculate the roots and sort them in ascending order. 
 
Now, we wish to provide some numerical validation of Theorem \ref{conv}. First, we give some examples when we let $\lambda$ approach $1$ from below then we check the case when $\lambda$ approach $1$ from above. The examples are given by considering the first three transmission eigenvalues, as roots of $d_m(k)$, for $m=0,1,2,\ldots$.Therefore,  we have that the limiting value as $\lambda$ tends to 1 of the transmission eigenvalues, are the corresponding roots for $\lambda =1$. When $\lambda=1,$ $\eta=1$ and $n=4$ we have that $k_1(1)=2.7741$, $k_2(1)=3.2908$,  and $k_3(1)=3.3122$ are the first three limiting transmission eigenvalues, coming from $d_1(k),\ d_0(k)$, and $d_2(k)$ respectively. From this, we show that numerically $k_j(\lambda)\longrightarrow k_j(1)$ as $\lambda\longrightarrow 1^-$ for $j=1,2,3$ and the results are presented in Table \ref{first table of k's}. We also check the estimated order of convergence (EOC) which is given by 
$$\text{EOC}=\log(\epsilon_{\lambda_p}/\epsilon_{\lambda_{p+1}})/\log(2)\hspace{.2cm}\text{where}\hspace{.2cm}\epsilon_{\lambda_p}=|k_j(\lambda_p)-k_j(1)| \hspace{.2cm}\text{for}\hspace{.2cm}j=1,2,3$$
$$\text{and}\hspace{.2cm}\lambda_{p}=1-\frac{1}{2^p}\hspace{.2cm}\text{for}\hspace{.2cm}p=1,2,3\dots$$
where our calculations suggest first order convergence as $\lambda\longrightarrow 1^-$.  Also, notice that in Table \ref{first table of k's} the eigenvalues seem to be monotone with respect to $\lambda$. We see that $k_1(\lambda)$ is descending $k_2(\lambda)$ and $k_3(\lambda)$ are ascending with respect to $\lambda$.

\begin{table}[!ht]
  \begin{center}
    \begin{tabular}{l|c |c |r r} 
      $\lambda$ & $k_1(\lambda)$ \hspace{.5cm} EOC& $k_2(\lambda)$ \hspace{.5cm} EOC&$k_3(\lambda)$ \hspace{.5cm} EOC \\
      \hline
    $1 -1/2$ & 3.0394 \hspace{.8cm} N/A & 3.0561 \hspace{.8cm} N/A & 3.2494 \hspace{.8cm} N/A\\
    $1 -1/4$ & 2.8388 \hspace{.5cm}  2.0346 & 3.1970 \hspace{.5cm} 1.3241 & 3.2942 \hspace{.5cm} 1.8057\\
    $ 1 -1/8$ & 2.7990 \hspace{.5cm} 1.3774 & 3.2509 \hspace{.5cm} 1.2313 & 3.3048 \hspace{.5cm} 1.2831\\
    $ 1 -1/16$ & 2.7853 \hspace{.5cm} 1.1590 & 3.2723 \hspace{.5cm} 1.1092 & 3.3088 \hspace{.5cm} 1.1223\\
    $ 1 -1/32 $& 2.7794  \hspace{.5cm} 1.0770 & 3.2819 \hspace{.5cm} 1.0516 & 3.3106 \hspace{.5cm} 1.0476\\
    $ 1 -1/64 $&  2.7767 \hspace{.5cm} 1.0415 &  3.2864 \hspace{.5cm} 1.0212  & 3.3114 \hspace{.5cm} 1.0177\\
    $ 1 -1/128$ & 2.7754 \hspace{.5cm} 1.0283 & 3.2886 \hspace{.5cm} 1.0066 & 3.3118 \hspace{.5cm} 0.9823 \\
    $  1 -1/256$ &  2.7747 \hspace{.5cm} 1.0465&
    3.2897 \hspace{.5cm} 0.9934 & 3.3120 \hspace{.5cm} 0.9652\\
    $1 -1/512$ & 2.7744 \hspace{.5cm} 1.0728 & 3.2902 \hspace{.5cm} 0.9740 & 3.3121 \hspace{.5cm} 0.9329 \\
    $ 1 -1/1024$ & 2.7742 \hspace{.5cm}  1.1575 & 3.2905 \hspace{.5cm} 0.9494 & 3.3121 \hspace{.5cm} 0.8745
    \end{tabular}
  \end{center}
  \caption{Convergence of the transmission eigenvalues as $\lambda\longrightarrow 1^-$ for  $n=4$ and $\eta=1$. Here, the limiting values are $k_1(1)=2.7741,$ $k_2(1)=3.2908$, and $k_3(1)=3.3122$.}
\label{first table of k's}
\end{table}

 We now give a numerical example of the convergences when $\lambda\in (1,\infty)$. It is important to remember that for this case, we have that $\eta_{max} <0$ and $\lambda n_{max}-1<0$ as $\lambda\longrightarrow 1^+$. We again compute the EOC with  
$$\lambda_{p}=1+\frac{1}{2^p}\hspace{.2cm}\text{for}\hspace{.2cm}p=1,2,3\dots$$
to establish the convergence rate. For Table \ref{third table of k's}, we choose $n=1/3$ and $\eta=-1$ following the assumptions on the coefficients given in Theorem \ref{conv}.  Again, we compute the lowest three roots of $d_m(k)$ for $\lambda_{p}$. We have that the limiting transmission eigenvalues for $n=1/3$ and $\eta=-1$  are given by $k_1(1)=6.9883,$ $k_2(1)=7.0107$, and $k_3(1)=7.9523$, being the first roots of $d_0(k),\ d_2(k)$, and $d_1(k)$ respectively. 

\begin{table}[!ht]
  \begin{center}
       \begin{tabular}{l|c |c |r r} 
      $\lambda$ & $k_1(\lambda)$ \hspace{.5cm} EOC& $k_2(\lambda)$ \hspace{.5cm} EOC&$k_3(\lambda)$ \hspace{.5cm} EOC \\
      \hline
      $1 +1/2$ & 7.1094 \hspace{.8cm} N/A & 7.4849 \hspace{.8cm} N/A & 7.6108 \hspace{.8cm} N/A\\
       $1 +1/4$ & 7.0395 \hspace{.5cm} 1.2433 & 7.2250 \hspace{.5cm} 1.1455 & 7.7774 \hspace{.5cm} 0.9655\\
      $ 1 +1/8$ & 7.0121 \hspace{.5cm} 1.1084 & 7.1108 \hspace{.5cm} 1.0984 &  7.8660 \hspace{.5cm} 1.0189\\
       $ 1 +1/16$ & 6.9998 \hspace{.5cm} 1.0527 & 7.0590 \hspace{.5cm} 1.0513 & 7.9097 \hspace{.5cm} 1.0168\\
      $ 1 +1/32 $& 6.9940 \hspace{.5cm} 1.0268 & 7.0344 \hspace{.5cm} 1.0265 &  7.9311 \hspace{.5cm} 1.0085\\
      $ 1 +/64 $& 6.9912 \hspace{.5cm} 1.0181 & 7.0224 \hspace{.5cm} 1.0165 &  7.9417 \hspace{.5cm} 1.0027\\
     $  1 +1/128$ & 6.9898 \hspace{.5cm} 1.0157&
    7.0165 \hspace{.5cm} 1.0124 & 7.9470 \hspace{.5cm} 0.9973\\
       $ 1 +1/256$ & 6.9891 \hspace{.5cm} 1.0106 & 7.0136 \hspace{.5cm} 1.0125 & 7.9496 \hspace{.5cm} 0.9892\\
       $1 +1/512$ &  6.9887 \hspace{.5cm} 1.0431 & 7.0121 \hspace{.5cm} 1.0304 & 7.9509 \hspace{.5cm}  0.9732\\
      $ 1 +1/1024$ & 6.9886 \hspace{.5cm} 1.1375 & 7.0114 \hspace{.5cm} 1.0521 & 7.9516 \hspace{.5cm} 0.9582
    \end{tabular}
  \end{center}
  \caption{Convergence of the transmission eigenvalues as $\lambda\longrightarrow 1^+$ for $n=1/3$ and $\eta=-1$. Here the limiting values are $k_1(1)=6.9883,$ $k_2(1)=7.0107$ and $k_3(1)=7.9523$.}
\label{third table of k's}
\end{table}

We again notice that, in Table \ref{third table of k's} the eigenvalues seem to be monotone with respect to $\lambda$. We see that $k_1(\lambda)$ and $k_2(\lambda)$ are increasing where as $k_3(\lambda)$ is decreasing with respect to $\lambda$. Although, we only showed that there is convergence, we have these numerical examples that seem to suggest monotonicity  of the transmission eigenvalues with respect to the parameter $\lambda$. Here, we conjecture the monotonicity but due the variational form studied in the previous section we are unable to obtain this result theoretically.

\subsection{Monotonicity of $\eta$ and $n$ on the Unit Disk}
Here, we will provide some numerics for the monotonicity with respect to  $\eta$ and $n$ given in Theorems  \ref{mono1} and \ref{mono2}. Just as in the previous section, we will assume that $D$ is the unit disk with constant coefficients.  Therefore, we can again use the fact that $k$ is a transmission eigenvalue provided that it is a root for $d_m(k)$ given by \eqref{41}.

We first consider the monotonicity with respect to the parameter $n$. To this end, recall that $\lambda\in (1,\infty), \eta_{max}<0$, and $\lambda n_{max}-1<0$. Therefore, we fix $\lambda=2$ and $\eta=-3$ and report the transmission eigenvalues $k_j(n)$ for $j=1,2$ corresponding to the lowest two roots of $d_m(k)$, in Table \ref{n1}. 
\begin{table}[!ht]
  \begin{center}
    \begin{tabular}{c|c |c|c|c } 
    $n$ & $1/6$ & $1/5$ & $1/4$ & $1/3$\\
    \hline 
    $k_1(n)$ & 4.8387 & 4.9935 & 5.6504 & 6.5592\\ 
    $k_2(n)$ & 4.8893 & 5.6474 & 6.0112 & 7.3299
    \end{tabular}
  \end{center}
   \caption{Monotonicity with respect to $n$ where $\lambda=2$ and $\eta=-3$ for the unit disk. Here, $k_j$ are the first two transmission eigenvalues. }
\label{n1}
\end{table}

In a similar fashion, we now provide numerical examples for the case when the parameters  $\lambda\in (0,1)$, $\eta_{min}>0$, and $\lambda n_{min}-1>0$ corresponding to Theorem \ref{mono2}. Therefore, we again report the first two roots of the functions $d_m(k)$. In Table \ref{n2}, we fix $\lambda=1/2$ and $\eta=1$ for $k_j(n)$ for $j=1,2$. 
\begin{table}[!ht]
  \begin{center}
    \begin{tabular}{c|c |c|c|c|c} 
         $n$ & $3$& $4$ & $5$ & $6$ & $7$\\
    \hline 
    $k_1(n)$ & 3.9850 & 3.0394 & 2.3699 & 2.0651 & 1.6559\\ 
    $k_2(n)$ & 4.2464  & 3.0561 & 2.5280 & 2.0706 & 1.8761\\
          \end{tabular}
  \end{center}
   \caption{Monotonicity with respect to $n$ where $\lambda=1/2$ and $\eta=1$ for the unit disk. Here, $k_j$ are the first two transmission eigenvalues.}
\label{n2}
\end{table}

 Next, we turn our attention to the monotonicity with respect to $\eta$. We first consider the case where we have $\lambda\in (1,\infty)$, $\eta_{max}<0$, and $\lambda n_{max}-1<0$. Recall, that from Theorem \ref{mono1} we expect that the transmission eigenvalues to be increasing with respect to $\eta$. In Table \ref{eta1}, we fix $\lambda=5$ and $n=1/6$ to compute $k_j(\eta)$ for $j=1,2$ and we can see the monotonicity from the reported values. 
 \begin{table}[!ht]
  \begin{center}
    \begin{tabular}{c|c |c|c|c|c} 
      $\eta$ & $-4$& $-3$ & $-2$ & $-1$ & $-1/2$\\
    \hline 
    $k_1(\eta)$ &  4.7141 & 5.0753  &  5.4263  & 5.4283 & 5.4293\\ 
    $k_2(\eta)$ & 5.4220 & 5.4242& 5.7292 & 5.9486 &6.0176\\
    \end{tabular}
  \end{center}
   \caption{Monotonicity with respect to $\eta$ where $\lambda=5$ and $n=1/6$ for the unit circle. Here, $k_j$ are the first two transmission eigenvalues.}
    \label{eta1}
\end{table}

 Now, we focus on case corresponding to Theorem \ref{mono2} where the transmission eigenvalues are decreasing with respect to the parameter $\eta$. Therefore, we need the assumptions $\lambda\in (0,1)$, $\eta_{min}>0$, and $\lambda n_{min}-1>0$ for the result to hold. In Table \ref{eta2}, we fix $\lambda=1/2$ and $n=3$ for $k_j(n)$ respectively for $j=1,2$. 
\begin{table}[!ht]
  \begin{center}
    \begin{tabular}{c|c |c |c|c|c} 
     $\eta$ & $1$& $2$ & $3$ & $4$ & $5$\\
    \hline 
    $k_1(\eta)$ &   3.9850& 3.6700 &  3.5212 & 2.6262 & 1.6354\\ 
    $k_2(\eta)$ & 4.2464  &  4.0269 & 3.5409 & 3.1242 & 1.9005
    \end{tabular}
  \end{center}
     \caption{ Monotonicity with respect to $\eta$ where $\lambda=1/2$ and $n=3$ for the unit circle. Here, $k_j$ are the first two transmission eigenvalues.}
         \label{eta2}
     \end{table}

\subsection{Numerics via Boundary Integral Equations}
The derivation of the boundary integral equation to solve the problem follows along the same lines as in \cite[Section 3]{te-cbc3} where one uses a single layer ansatz for the functions $w$ and $v$ with unknown densities $\varphi$ and $\psi$ (refer also to \cite{cakonikress} for the original idea). Precisely, we use
\[w(x)=\mathrm{SL}_{k\sqrt{n}}\varphi(x)\qquad\text{and}\qquad v(x)=\mathrm{SL}_{k}\psi(x)\,,\qquad x\in D\,,\]
where we define the single-layer by
\[\mathrm{SL}_{k}\phi(x)=\int_{\partial D}\Phi_k(x,y)\phi(y)\;\mathrm{d}s(y)\,,\qquad  x\in D\]
where 
\[\Phi_k(x,y) = \frac{\text{i}}{4} H^{(1)}_0\big( k|x-y| \big), \quad \text{ when} \,\, x\neq y\]
is the fundamental solution of the Helmholtz equation in two dimensions. Here we let $H^{(1)}_0$ denote the zeroth order first kind Hankel function. On the boundary we have
\[w(x)=\mathrm{S}_{k\sqrt{n}}\varphi(x)\qquad\text{and}\qquad  v(x)=\mathrm{S}_{k}\psi(x)\,,\]
where the boundary operator $\mathrm{S}_{k}$ is given by
\[\mathrm{S}_{k}\phi(x)=\int_{\partial D}\Phi_k(x,y)\phi(y)\;\mathrm{d}s(y)\,,\qquad x\in \partial D\,.\]
Likewise, we obtain 
\[\partial_\nu w(x)=\left(\frac{1}{2}\mathrm{I}+\mathrm{K}^\top_{k\sqrt{n}}\right)\varphi(x)\qquad\text{and}\qquad  \partial_\nu v(x)=\left(\frac{1}{2}\mathrm{I}+\mathrm{K}^\top_{k}\right)\psi(x)\,,\]
where 
\[\mathrm{K}^\top_{k}\phi(x)=\int_{\partial D}\partial_{\nu(x)}\Phi_k(x,y)\phi(y)\;\mathrm{d}s(y)\,,\qquad x\in \partial D\,\]
and $\mathrm{I}$ denotes the identity.
Using the given boundary conditions and assuming that $k$ and $k\sqrt{n}$ are not eigenvalues of $-\Delta$ in $D$ yields
\[\left[\lambda\left(\frac{1}{2}\mathrm{I}+\mathrm{K}^{\top}_{k\sqrt{n}}\right)\mathrm{S}_{k\sqrt{n}}^{-1}-\left(\frac{1}{2}\mathrm{I}+\mathrm{K}^{\top}_{k}\right)\mathrm{S}_{k}^{-1}-\eta \mathrm{I}\right]w=0\,,\]
which is a non-linear eigenvalue problem of the form
\begin{eqnarray}
M(k;n,\eta,\lambda)w=0\,.
\label{sys}
\end{eqnarray}
Here, the parameters $n$, $\eta$, and $\lambda$ are given.
Note that we focus on the transpose of this equation since the boundary integral operator
\[\mathrm{K}_{k}\phi(x)=\int_{\partial D}\partial_{\nu(y)}\Phi_k(x,y)\phi(y)\;\mathrm{d}s(y)\,,\qquad x\in \partial D\]
can be numerically approximated avoiding the singularity (see \cite[Section 4.3]{kleefeldhot} for details and the discretization of the boundary integral operators). Then, the non-linear eigenvalue problem is solved with the Beyn's algorithm (see \cite{beyn2012integral} for a detailed description). This algorithm converts a large scale non-linear eigenvalue problem to a linear eigenvalue problem of smaller size by appealing complex analysis, i.e. contour integrals in the complex plane. The contour we will choose, will be the disk in the complex plane centered at $\mu \in \C$ for a fixed radius $R$. From this, Beyn's algorithm will compute the transmission eigenvalues that lie in the interior of the chosen contour. 
\begin{figure}[!ht]
\centering 
\begin{tikzpicture}
		\begin{axis}[
			height=6 cm, 
			width=6 cm,
			ymin=-1.6,ymax=1.6,
			xmin=-1.6,xmax=1.6,
			xtick={-1.5,-1,-0.5,0,0.5,1,1.5},
			ytick={-1.5,-1,-0.5,0,0.5,1,1.5},
			trig format plots=rad,
			axis equal,
		    tick label style={font=\footnotesize},
		    tick align=center
			]
			\addplot [domain=0:2*pi,samples=200, blue,thick,fill=blue!30]({cos(x)},{1.2*sin(x)});
		\end{axis}
	\end{tikzpicture}\hspace{1cm}
\begin{tikzpicture}
		\begin{axis}[
			height=6 cm, 
			width=6 cm,
			ymin=-1.6,ymax=1.6,
			xmin=-1.6,xmax=1.6,
			xtick={-1.5,-1,-0.5,0,0.5,1,1.5},
			ytick={-1.5,-1,-0.5,0,0.5,1,1.5},
			trig format plots=rad,
			axis equal,
			tick label style={font=\footnotesize},
			tick align=center
			]
			\addplot [domain=0:2*pi,samples=200, blue,thick,fill=blue!30]({0.75*cos(x)+0.3*cos(2*x)},{sin(x)});
		\end{axis}
	\end{tikzpicture}
\caption{Graphical representation for the elliptical and kite-shaped domains considered in this section.}
\label{e.g. scatterer}
\end{figure}
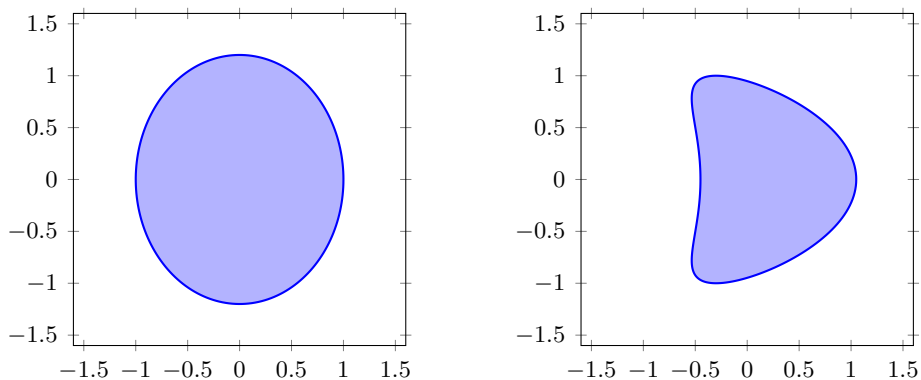

First, we show that we are able to reproduce the values given in Example \ref{ex1} on page \pageref{ex1} for the unit disk using the material parameters $\lambda=2$, $n=4$, $\eta=-1/100$ with the boundary element collocation method. We use $120$ collocation nodes ($40$ pieces) within our algorithm for the discretization of the boundary. For the Beyn algorithm we take the parameters $tol=10^{-4}$, $\ell=20$, and $N=24$ discretization points for the two contour integrals where the contour is a circle with center $\mu$ and radius $R=1/2$. Next, we pick $\mu=3.5$ and obtain the interior transmission eigenvalue $3.4567-0.0000\mathrm{i}$ which agrees with the value reported in Example \ref{ex1} to four digits accuracy. This eigenvalues has multiplicity one (it corresponds to $m=0$). Using $\mu=2.2$ yields the interior transmission eigenvalue $2.1516 - 0.0000\mathrm{i}$ with multiplicity two which is in agreement with the value $2.151602$ obtained from the determinant for $m=4$. Again, we observe that all reported digits are correct. The accuracy does not depend on the multiplicity of the eigenvalue. Finally, we test our boundary element collocation method for a complex-valued interior transmission eigenvalue. Using $\mu=2.2+0.6\mathrm{i}$ yields the simple eigenvalue $2.2032+0.2905\mathrm{i}$ (rounded) which is in agreement to five digits with the value reported in Example \ref{ex1} using the determinant with $m=0$. In sum, this shows that we are able to compute both real and complex-valued interior transmission eigenvalues to high accuracy. It gives us the flexibility to now compute them also for other scatterers as well.

For an ellipse with semi-axis $a=1$ and $b=1.2$ (refer to Figure \ref{e.g. scatterer}) i.e. 
$$\partial D = (\cos(t),1.2 \sin(t)) \quad \text{ for } \,\, t \in [0,2\pi)$$ using $\mu=1/2$ as well as $\mu=3/2$ 
and the same material parameters as before, we obtain the first nine real-valued interior transmission eigenvalues 
\begin{eqnarray*} 
0.0420 \quad 0.6036 \quad 0.7165 \quad 1.0830 \quad 1.1136 \quad 1.5244 \quad 1.5311 \quad 1.9494 \quad 1.9507\,,                                                                                                     
\end{eqnarray*}
where we skipped to report the imaginary eigenvalues.
In comparison, the first nine real-valued interior transmission eigenvalues for the unit disk are
\begin{eqnarray*}
0.0534 \quad 0.7208 \quad 0.7208 \quad 1.2131 \quad 1.2131 \quad 1.6864 \quad 1.6864 \quad 2.1516 \quad 2.1516\,.
\end{eqnarray*}

Next, we compute the interior transmission eigenvalues for the kite-shaped domain (refer to Figure \ref{e.g. scatterer}) using the same parameters as before. Its boundary is given parametrically by 
$$\partial D = (0.75\cos(t)+0.3\cos(2t),\sin(t)) \quad \text{ for } \,\, t \in [0,2\pi)$$ 
(refer to \cite{cakonikress}). We use $\mu=1/2$, $\mu=3/2$ as well as $\mu=5/2$ to 
obtain the first nine real-valued interior transmission eigenvalues
\begin{eqnarray*}
0.0523 \quad 0.6868 \quad 0.8514 \quad 1.3452 \quad 1.4398 \quad 1.6348 \quad 2.0181 \quad 2.1439 \quad 2.3494\,.                                          
\end{eqnarray*}

Now, we consider the ellipse with semi-axis $a=1$ and $b=1.2$ and use the material parameters $\eta=1$ and $n=4$ and vary $\lambda$ such that it approaches one from below. We will validate again numerically Theorem \ref{conv} as it was done for the unit disk in Table \ref{first table of k's}. The results are reported in Table \ref{n1ellipse}. Note that the first three real-valued interior transmission eigenvalues for $\lambda=1$ are $k_1(1)=2.4343$, $k_2(1)=2.6726$, and $k_3(1)=2.8300$ which we obtained using $\mu=5/2$ with $240$ collocation nodes.
\begin{table}[ht]
  \begin{center}
    \begin{tabular}{l|c |c |r r} 
      $\lambda$ & $k_1(\lambda)$ \hspace{.5cm} EOC& $k_2(\lambda)$ \hspace{.5cm} EOC&$k_3(\lambda)$ \hspace{.5cm} EOC \\
      \hline
         $ 1 -1/2$ & 2.5043 \hspace{.8cm} {N/A} & 2.7413 \hspace{.8cm} {N/A} & 2.8777 \hspace{.8cm} {N/A}\\
         $ 1 -1/4$ & 2.4701 \hspace{.5cm} 0.9689 & 2.7077 \hspace{.5cm} 0.9693 & 2.8535 \hspace{.5cm} 1.0165\\
        $  1 -1/8$ & 2.4523 \hspace{.5cm} 0.9867 & 2.6903 \hspace{.5cm} 0.9864 & 2.8416 \hspace{.5cm} 1.0089\\
       $  1 -1/16$ & 2.4434 \hspace{.5cm} 0.9940 & 2.6815 \hspace{.5cm} 0.9937 & 2.8356 \hspace{.5cm} 1.0026\\
      $  1 -1/32 $ & 2.4388 \hspace{.5cm} 0.9972 & 2.6770 \hspace{.5cm} 0.9970 & 2.8327 \hspace{.5cm} 1.0078\\
      $  1 -1/64 $ & 2.4366 \hspace{.5cm} 0.9987 & 2.6748 \hspace{.5cm} 0.9984 & 2.8312 \hspace{.5cm} 0.9847\\
      $  1 -1/128$ & 2.4354 \hspace{.5cm} 0.9993 & 2.6737 \hspace{.5cm} 0.9988 & 2.8305 \hspace{.5cm} 1.0107\\
      $  1 -1/256$ & 2.4349 \hspace{.5cm} 0.9998 & 2.6731 \hspace{.5cm} 0.9994 & 2.8301 \hspace{.5cm} 1.0436\\
      $  1 -1/512$ & 2.4346 \hspace{.5cm} 1.0021 & 2.6729 \hspace{.5cm} 0.9997 & 2.8299 \hspace{.5cm} 0.8813\\
      $ 1 -1/1024$ & 2.4344 \hspace{.5cm} 1.0015 & 2.6727 \hspace{.5cm} 0.9968 & 2.8298 \hspace{.5cm} 1.0722
    \end{tabular}
  \end{center}
  \caption{Convergence of the transmission eigenvalues {for the ellipse,} as $\lambda\longrightarrow 1^-$ for  $n=4$ and $\eta=1$. Here the limiting values are $k_1(1)=2.4343$, $k_2(1)=2.6726$ and $k_3(1)=2.8300$.}
\label{n1ellipse}
\end{table} 
As we can see, we obtain the linear convergence for $\lambda\longrightarrow 1^-$ for the given ellipse as expected.
Interestingly, we also obtain linear convergence for $\lambda \longrightarrow 1^+$ for $\eta=1$ and $n=4$ although theoretically not justified. Refer to Table \ref{n2ellipse}.
\begin{table}[ht]
  \begin{center}
    \begin{tabular}{l|c |c |r r} 
      $\lambda$ & $k_1(\lambda)$ \hspace{.5cm} EOC& $k_2(\lambda)$ \hspace{.5cm} EOC&$k_3(\lambda)$ \hspace{.5cm} EOC \\
      \hline
        $ 1 +1/2$ & 2.3601 \hspace{.8cm} {N/A} & 2.5995 \hspace{.8cm}{N/A} & 2.7844 \hspace{.8cm} {N/A} \\
         $ 1 +1/4$ & 2.3974 \hspace{.5cm} 1.0101 & 2.6364 \hspace{.5cm} 1.0138 & 2.8067 \hspace{.5cm} 0.9758\\
        $  1 +1/8$ & 2.4160 \hspace{.5cm} 1.0080 & 2.6546 \hspace{.5cm} 1.0093 & 2.8181 \hspace{.5cm} 0.9880\\
       $  1 +1/16$ & 2.4252 \hspace{.5cm} 1.0047 & 2.6636 \hspace{.5cm} 1.0052 & 2.8239 \hspace{.5cm} 0.9980\\
      $  1 +1/32 $ & 2.4297 \hspace{.5cm} 1.0025 & 2.6681 \hspace{.5cm} 1.0028 & 2.8268 \hspace{.5cm} 0.9932\\
      $  1 +1/64 $ & 2.4320 \hspace{.5cm} 1.0012 & 2.6703 \hspace{.5cm} 1.0014 & 2.8283 \hspace{.5cm} 0.9966\\
      $  1 +1/128$ & 2.4331 \hspace{.5cm} 1.0006 & 2.6715 \hspace{.5cm} 1.0009 & 2.8290 \hspace{.5cm} 1.0056\\
      $  1 +1/256$ & 2.4337 \hspace{.5cm} 1.0002 & 2.6720 \hspace{.5cm} 1.0011 & 2.8294 \hspace{.5cm} 0.9796\\
      $  1 +1/512$ & 2.4340 \hspace{.5cm} 0.9989 & 2.6723 \hspace{.5cm} 1.0004 & 2.8296 \hspace{.5cm} 0.9780\\
      $ 1 +1/1024$ & 2.4341 \hspace{.5cm} 0.9982 & 2.6724 \hspace{.5cm} 1.0094 & 2.8297 \hspace{.5cm} 1.0376
    \end{tabular}
  \end{center}
  \caption{Convergence of the transmission eigenvalues {for the ellipse,} as $\lambda\longrightarrow 1^+$ for $n=4$ and $\eta=1$. Here the limiting values are $k_1(1)=2.4343$, $k_2(1)=2.6726$, and $k_3(1)=2.8300$.}
\label{n2ellipse}
\end{table}
Again, we also observe a monotonicity of the interior transmission eigenvalues with respect to $\lambda$ although we have not shown this fact from the theoretical point of view.

Finally, we show some monotonicity results for the kite-shaped domain. We first fix $\lambda=2$ as well as $\eta=-1$ and vary the index of refraction $n$. Using $120$ collocation nodes within the boundary element collocation method and the same parameters as before for the Beyn method with $\mu=5$, $\mu=6$, $\mu=7$ as well as $\mu=8$ and $\mu=8.5$ yields the first three real-valued interior transmission eigenvalues reported in Table \ref{table9}.

\begin{table}[H]
  \begin{center}
    \begin{tabular}{c|c |c|c|c|c} 
    
     $n$ & $1/7$& $1/6$ & $1/5$ & $1/4$ & $1/3$\\
    \hline 
    $k_1(n)$ & 5.6837 & 6.0582 & 6.5231 & 7.0820 & 8.1993\\ 
    $k_2(n)$ & 6.0870 & 6.2456 & 6.5370 & 7.1497 & 8.2397\\
    $k_3(n)$ & 6.6334 & 6.8110 & 7.1306 & 7.7996 & 8.9628\\
          \end{tabular}
  \end{center}
   \caption{Values of $k_j(n)$'s when $n$ varies using $\lambda=2$ and $\eta=-1$. The first real-valued transmission eigenvalue increases monotonically with respect to the parameter $n$ as stated in Theorem \ref{mono1} item 1 for the kite-shaped domain.}
\label{table9}
\end{table}

As we can see, the first real-valued interior transmission eigenvalue is monotone with respect to the parameter $n$ as stated in Theorem \ref{mono1} item 1. Interestingly, the same seems to be true for the second and third real-valued interior transmission eigenvalue. In Table \ref{table10} we show the monotonicity behavior for fixed material parameter $\lambda=2$ and $n=1/6$ and varying $\eta$ using $\lambda=4.5$, $\lambda=5.5$ as well as $\lambda=6.5$.

\begin{table}[H]
  \begin{center}
    \begin{tabular}{c|c |c|c|c|c} 
    
     $\eta$ & $-5$& $-4$ & $-3$ & $-2$ & $-1$\\
    \hline 
    $k_1(\eta)$ & 4.5272 & 5.4110 & 5.7363 & 5.9202 & 6.0582\\ 
    $k_2(\eta)$ & 5.3892 & 5.5606 & 5.7689 & 6.0044 & 6.2456\\
    $k_3(\eta)$ & 5.9899 & 6.1585 & 6.3488 & 6.5702 & 6.8110\\
          \end{tabular}
  \end{center}
   \caption{Values of $k_j(\eta)$'s when $\eta$ varies using $\lambda=2$ and $n=1/6$. The first real-valued transmission eigenvalue increases monotonically with respect to the parameter $\eta$ as stated in Theorem \ref{mono1} item 2 for the kite-shaped domain.}
\label{table10}
\end{table}

We observe the expected monotonicity behavior for the first real-valued interior transmission eigenvalue with respect to the parameter $\eta$ as stated in Theorem \ref{mono1} item 2. Strikingly, the other interior transmission eigenvalues also show a monotonicity behavior. 

Next, we show numerical results to validate Theorem \ref{mono2}. First, we pick the material parameter $\lambda=1/2$ and $\eta=1$ and vary $n$. We use $\mu=5$, $\mu=3.5$, and $\mu=3$ as well as $\mu=2$ to obtain the results reported in Table \ref{table11}.

\begin{table}[H]
  \begin{center}
    \begin{tabular}{c|c |c|c|c|c} 
    
     $n$ & $3$& $4$ & $5$ & $6$ & $7$\\
    \hline 
    $k_1(n)$ & 4.6102 & 3.4720 & 2.8104 & 2.4169 & 2.0606\\ 
    $k_2(n)$ & 4.6988 & 3.4863 & 2.8713 & 2.4513 & 2.2158\\
    $k_3(n)$ & 5.1191 & 3.8013 & 3.0731 & 2.6823 & 2.4215\\
          \end{tabular}
  \end{center}
   \caption{Values of $k_j(n)$'s when $n$ varies using $\lambda=1/2$ and $\eta=1$. The first real-valued transmission eigenvalue decreases monotonically with respect to the parameter $n$ as stated in Theorem \ref{mono2} item 1 for the kite-shaped domain.}
\label{table11}
\end{table}

As we can see, we numerically obtain the decreasing behavior for the first real-valued interior transmission eigenvalue as stated in Theorem \ref{mono2} item 1. Interestingly, we also observe a monotonic behavior for the next two interior transmission eigenvalues as well. Now, we show numerical results for the material parameters $\lambda=1/2$ and $n=3$ for varying $\eta$. Using $\mu=5$, $\mu=4.5$, and $\mu=4$ yields the results that are reported in Table \ref{table12}.

\begin{table}[H]
  \begin{center}
    \begin{tabular}{c|c |c|c|c|c} 
    
     $\eta$ & $1/2$ &$1$& $2$ & $3$ & $4$ \\
    \hline 
    $k_1(\eta)$ & 4.7339 & 4.6102 & 4.3089 & 4.0502 & 3.8981 \\ 
    $k_2(\eta)$ & 4.7572 & 4.6988 & 4.5914 & 4.3550 & 4.0804 \\
    $k_3(\eta)$ & 5.1747 & 5.1191 & 4.9526 & 4.6436 & 4.4735 \\
          \end{tabular}
  \end{center}
   \caption{Values of $k_j(\eta)$'s when $\eta$ varies using $\lambda=1/2$ and $n=3$. The first real-valued transmission eigenvalue decreases monotonically with respect to the parameter $\eta$ as stated in Theorem \ref{mono2} item 2 for the kite-shaped domain.}
\label{table12}
\end{table}

Again, we observe the proposed monotone decreasing behavior as stated in Theorem \ref{mono2} item 2 for the first real-valued interior transmission eigenvalue for the kite-shaped domain. Strikingly, the same seems to be true for the second and third eigenvalue as well.

\section{Summary and outlook}
A transmission eigenvalue problem with two conductivity parameters is considered. Existence as well as discreteness of corresponding real-valued interior transmission eigenvalues is proven. Further, it is shown that the first real-valued interior transmission eigenvalue is monotone with respect to the two parameters $\eta$ and $n$ under certain conditions. Additionally, the linear convergence for $\lambda$ against one is shown theoretically. Next, the theory is validated by extensive numerical results for a unit disk using Bessel functions. Further, numerical results are presented for general scatterer using boundary integral equations and its discretization via boundary element collocation method. Interestingly, we can show numerically monotonicity results for cases that are {not} covered yet by the theory. The existence of complex-valued interior transmission eigenvalues is still open, but it can be shown numerically that they do exist. {A worthwhile future project is to study} the case when $\lambda$ is variable. 


\end{document}